%
%
%
%
\documentclass{amsart}
\usepackage{geometry}
\newtheorem{theorem}{Theorem}[section]
\newtheorem{lemma}[theorem]{Lemma}
\newtheorem{proposition}[theorem]{Proposition}
\newtheorem{corollary}[theorem]{Corollary}
\theoremstyle{definition}
\newtheorem{definition}[theorem]{Definition}

\newcommand{\ks}{{\mathcal K\,}}
\newcommand{\hs}{{\mathcal H\,}}
\newcommand{\C}{{\mathcal C\,}}
\newcommand{\z}{\mathbb Z\,}
\theoremstyle{remark}

\numberwithin{equation}{section}
\allowdisplaybreaks[4]


\begin{document}

\title{Generalized frames and controlled  operators in Hilbert space}

\author{Dongwei Li}
\address{School of Mathematical Sciences, University of Electronic Science and Technology of China, 611731, P. R. China}
\email{ldwmath@163.com}

\author{Jinsong Leng}
\address{School of Mathematical Sciences, University of Electronic Science and Technology of China, 611731, P. R. China}
\email{jinsongleng@126.com}
\subjclass[2000]{42C15, 46C05}



\keywords{g-frame, controlled operators, controlled g-frame, perturbation.}

\begin{abstract}
Controlled frames and g-frames were considered recently as generalizations of frames in Hilbert spaces. In this paper we generalize some of the known results in frame theory to controlled g-frames. We obtain some new properties of controlled g-frames and obtain new controlled g-frames by considering controlled g-frames for its components. And we obtain some new resolutions of the identity. Furthermore, we study the stabilities of controlled g-frames under small perturbations.
\end{abstract}

\maketitle

\section{Introduction}
Frames were first introduced in 1952 by Duffin and Schaeffer \cite{duffin1952class} to study some problems in nonharmonic Fourier series, and were widely studied from 1986 since the great work by Daubechies et al. \cite{Daubechies1986}. To date, frame theory has broad applications in pure mathematics, for instance, Kadison-Singer problem  and statistics, as well as in applied mathematics, computer science and engineering applications. We refer to \cite{bodmann2005frames,kovavcevic2008introduction,leng2011optimal,leng2011optimald,morillas2017optimal,Sun2015Nonlinear} for an introduction to frame theory and its applications.   

In 2006, Sun \cite{sun2006g} introduced the concept of g-frame. G-frames are generalized frames, which include ordinary frames, bounded invertible linear operators, fusion frames, as well as many recent generalizations of frames. For more details see \cite{guo2014characterizations,sun2007stability}.
G-frames and g-Riesz bases in Hilbert spaces have some properties similar to those of frames, but not all the properties are similar \cite{sun2006g}. 
Controlled frames for spherical wavelets were introduced in \cite{bogdanova2005stereographic} and were reintroduced recently to improve the numerical efficiency of iterative  algorithms \cite{balazs2010weighted}. The role of controller operators is like the role of preconditions matrices or operators in linear algebra. 

Controlled g-frames were introduce by Khosravi et al. \cite{rahimi2013controlled}.  Now, many excellent results of controlled frames have been achieved and applied successfully \cite{balazs2010weighted,balazs2012multipliers}, which properties of the controlled frames may be extended to the g-frames? It is a tempting subject because of the complexity of the structure of g-frames compared with conventional frames. In this paper, we give some new properties of controlled g-frames and construct new controlled g-frames from a given controlled g-frame, and we generalize some of known results in g-frames to controlled g-frames in Section 2. In section 3 we obtain some new  resolutions of the identity with controlled g-frames, and in Section 4 we study the stability of controlled g-frames under small perturbations.

Throughout this paper, $\hs$ and $\ks$ are two separable Hilbert spaces and $\{\hs_i:i\in I\}$ is a sequence of subspaces of $\ks$, where $ I$ is a subset of $\z$. $L(\hs,\hs_i)$ is the collection of all bounded linear operators from $\hs$ into $\hs_i$, and $GL(\hs)$ denotes the set of all bounded linear operators which have bounded inverse. It is easy to see that if $T,U\in GL(\hs)$, then $T^*,T$ and $TU$ are also in $GL(\hs)$. Let $GL^{+}(\hs)$ be the set of positive operators in $GL(\hs)$. Also $I_{\hs}$ denotes the identity operator on $\hs$.

Note that for any sequence $\{\hs_i:i\in I\}$ of Hilbert spaces, we can always find a large Hilbert space $\ks$ such that for all $i\in I,~\hs_i\subset\ks$ (for example $\ks=\oplus_{i\in I}\hs_i$).
\begin{definition}
	A sequence $\Lambda=\{\Lambda_i\in L(\hs,\hs_i):i\in I\}$ is called a generalized frame, or simply a g-frame, for $\hs$ with respect to $\{\hs_i:i\in I\}$ if there exist constants $0<A\le B<\infty$ such that 
	$$A\|f\|^2\le \sum_{i\in  I}\left\|\Lambda_if \right\| ^2\le B\left\| f\right\|^2,~~~{\rm{for~ all }}~ f\in\hs.\eqno(1)$$
	The numbers $A$ and $B$ are called g-frame bounds.
\end{definition}
We call $\Lambda$ a tight g-frame if $A=B$ and Parseval g-frame if $A=B=1$. If the second inequality in (1) holds, the sequence is called g-Bessel sequence.

$\Lambda=\{\Lambda_i\in L(\hs,\hs_i):i\in I\}$ is called a g-frame sequence, if it is a g-frame for ${\rm{\overline{span}}}\{\Lambda^*_i(\hs)\}_{i\in I}$.

For each sequence $\{\hs_i\}_{i\in I}$, we define the space $(\sum_{i\in I}\oplus\hs_i)_{\ell_2}$ by
$$(\sum_{i\in I}\oplus\hs_i)_{\ell_2}=\big\{\{f_i\}_{i\in I}:f_i\in\hs_i,i\in I~{\rm and}~\sum_{i\in I}\|f_i\|^2<+\infty\big\}$$
with the inner product defined by 
$$\left\langle \{f_i\},\{g_i\}\right\rangle =\sum_{i\in I}\left\langle f_i,g_i\right\rangle .$$
\begin{definition}
	Let $\Lambda=\{\Lambda_i\in L(\hs,\hs_i):i\in I\}$ be a g-frame for $\hs$. Then the synthesis operator for $\Lambda=\{\Lambda_i\in L(\hs,\hs_i):i\in I\}$ is the operator 
	$$\Theta_{\Lambda}:(\sum_{i\in I}\oplus\hs_i)_{\ell_2}\longrightarrow\hs$$
	defined by
	$$\Theta_{\Lambda}(\{f_i\}_{i\in I})=\sum_{i\in I}\Lambda^*_i(f_i).$$
\end{definition}
The adjoint $\Theta^*_{\Lambda}$ of the synthesis operator is called analysis operator which is given by

$$\Theta^*_{\Lambda}:\hs\longrightarrow (\sum_{i\in I}\oplus\hs_i)_{\ell_2},~~~T^*(f)=\{\Lambda_if\}_{i\in I}.$$

By composing $\Theta_{\Lambda}$ and $\Theta^*_{\Lambda}$, we obtain the g-frame operator 
$$S_{\Lambda}:\hs\longrightarrow\hs,~~S_{\Lambda}f=\Theta_{\Lambda}\Theta^*_{\Lambda}f=\sum_{i\in I}\Lambda^*_i\Lambda_if.$$
It is easy to see that g-frame operator is a bounded, positive and invertible operator.
\section{Controlled g-frames and constructing new controlled g-frames}
Controlled g-frames with two controlled operators were studied in \cite{musazadeh2016some, rahimi2013controlled}. Next, we give the definition of controlled g-frames.
\begin{definition}
	Let $T,U\in GL(\hs)$. The family $\Lambda=\{\Lambda_i\in L(\hs,\hs_i):i\in I\}$ is called a $(T,U)$-controlled g-frame for $\hs$, if $\Lambda$ is a g-Bessel sequence and there exist constants $0<A\le B<\infty$ such that 
	$$A\|f\|^2\le \sum_{i\in I}\left\langle \Lambda_iTf,\Lambda_iUf\right\rangle \le B\|f\|^2,~~~\forall f\in\hs.\eqno(2)$$
	$A$ and $B$ are called the lower and upper controlled frame bounds, respectively. 
\end{definition}
If $U=I_{\hs}$, we call $\Lambda=\{\Lambda_i\}$ a $T$-controlled g-frame for $\hs$ with bounds $A$ and $B$. If the second part of the above inequality holds, it is called $(T,U)$-controlled g-Bessel sequence with bound $B$.

Let $\Lambda=\{\Lambda_i\in L(\hs,\hs_i):i\in I\}$ be  a $(T,U)$-controlled g-frame for $\hs$, then the $(T,U)$-controlled g-frame operator is defined by

$$S_{T\Lambda U}:\hs\longrightarrow\hs,~~S_{T\Lambda U}f=\sum_{i\in I}U^*\Lambda^*_i\Lambda_iTf,~~\forall f\in\hs.$$
It follows from the definition that for a g-frame, this operator is positive and invertible and 
$$AI_{\hs}\le S_{T\Lambda U}\le BI_{\hs},$$
also $S_{T\Lambda U}=U^*S_{\Lambda}T$.

For convenience we state the following lemma.
\begin{lemma}\cite{balazs2010weighted}
	Let $T:\hs\longrightarrow\hs$ be a linear operator. Then the following conditions are equivalent:
	\begin{enumerate}
		\item There exist $m>0$ and $M<\infty$, such that $mI_{\hs}\le T\le MI_{\hs}$.
		\item $T$ is positive and there exist $m>0$ and $M<\infty$, such that 
		$$m\|f\|^2\le \|T^{1/2}f\|^2\le M\|f\|^2.$$
		\item $T\in GL^+(\hs).$
	\end{enumerate}
\end{lemma}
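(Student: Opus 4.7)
The plan is to prove the three-way equivalence by a cycle of implications $(1)\Rightarrow(2)\Rightarrow(3)\Rightarrow(1)$, leaning on the functional calculus for positive self-adjoint operators. Before starting the cycle I would note that in every one of the three conditions the operator $T$ is self-adjoint (conditions (1) and (3) say so directly, while (2) assumes it as part of being positive), so the square root $T^{1/2}$ exists and is the unique positive self-adjoint operator with $(T^{1/2})^2=T$; in particular $\langle Tf,f\rangle=\langle T^{1/2}f,T^{1/2}f\rangle=\|T^{1/2}f\|^2$ for every $f\in\hs$.

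For $(1)\Rightarrow(2)$, the estimate $mI_\hs\le T\le MI_\hs$ unpacks to $m\|f\|^2\le\langle Tf,f\rangle\le M\|f\|^2$. Using the identity from the previous paragraph this is exactly $m\|f\|^2\le\|T^{1/2}f\|^2\le M\|f\|^2$, and positivity of $T$ is automatic since $\langle Tf,f\rangle\ge m\|f\|^2\ge 0$.

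The main step is $(2)\Rightarrow(3)$, and it is the place where a little care is needed. From the bound $\|T^{1/2}f\|^2\ge m\|f\|^2$ one first concludes that $T^{1/2}$ is injective and has closed range. Because $T^{1/2}$ is self-adjoint, $\overline{\operatorname{Ran}(T^{1/2})}=\operatorname{Ker}(T^{1/2})^\perp=\{0\}^\perp=\hs$, so the range is all of $\hs$ and $T^{1/2}$ is a bijection with bounded inverse of norm at most $m^{-1/2}$. Therefore $T=(T^{1/2})^2\in GL(\hs)$, and it is positive as a product of two commuting positive invertible operators, giving $T\in GL^+(\hs)$.

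Finally, $(3)\Rightarrow(1)$ follows from spectral considerations: since $T$ is positive, $\sigma(T)\subset[0,\|T\|]$, and since $T^{-1}$ is bounded, $0\notin\sigma(T)$, so actually $\sigma(T)\subset[\|T^{-1}\|^{-1},\|T\|]$. Setting $m=\|T^{-1}\|^{-1}$ and $M=\|T\|$ and applying the functional calculus to the functions $t\mapsto t-m$ and $t\mapsto M-t$ (both nonnegative on $\sigma(T)$) yields $mI_\hs\le T\le MI_\hs$. I expect the only real obstacle is the density-of-range argument in $(2)\Rightarrow(3)$; everything else is bookkeeping via the spectral theorem.
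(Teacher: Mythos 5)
Your proof is correct, but note that there is no internal proof to compare it against: the paper states this lemma without argument, quoting it from \cite{balazs2010weighted} (``For convenience we state the following lemma''). Your cycle $(1)\Rightarrow(2)\Rightarrow(3)\Rightarrow(1)$ is the standard spectral-calculus route and all three steps are sound: the identity $\langle Tf,f\rangle=\|T^{1/2}f\|^2$ settles $(1)\Rightarrow(2)$; the bounded-below-plus-dense-range argument for $T^{1/2}$, using self-adjointness to get $\overline{\operatorname{Ran}(T^{1/2})}=\ker(T^{1/2})^{\perp}=\hs$ and the lower bound to get closed range, correctly yields $(2)\Rightarrow(3)$; and the inclusion $\sigma(T)\subset[\|T^{-1}\|^{-1},\|T\|]$ together with the functional calculus applied to $t\mapsto t-m$ and $t\mapsto M-t$ gives $(3)\Rightarrow(1)$. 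Two small points are worth making explicit in a polished write-up. First, the lemma only assumes $T$ is ``a linear operator,'' so you should record that each hypothesis forces boundedness: the upper bound in (2) gives $\|T^{1/2}\|\le\sqrt{M}$ and hence $\|T\|\le M$, and in (1) an everywhere-defined symmetric operator is automatically bounded (Hellinger--Toeplitz), so all invocations of the bounded functional calculus are legitimate. Second, your opening remark that self-adjointness is built into each condition relies on the convention (valid on complex Hilbert spaces, and implicitly used by the paper) that positivity of $\langle Tf,f\rangle$ entails self-adjointness; stating this once makes the existence and uniqueness of $T^{1/2}$ unambiguous.
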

\begin{proposition}
	Let $T,U\in GL^+(\hs)$ and $\Lambda=\{\Lambda_i\in L(\hs,\hs_i):i\in I\}$ be a family of operator. Then the following statements hold.
	\begin{enumerate}
		\item  If $\{\Lambda_i:i\in I\}$ is a $(T,U)$-controlled g-frame for $\hs$, then $\{\Lambda_i:i\in I\}$ is a g-frame for $\hs$.
		\item If $\{\Lambda_i:i\in I\}$ is a g-frame for $\hs$ and $T,U\in GL^+(\hs)$, which commute with each other and commute with $S_{\Lambda}$, then $\{\Lambda_i:i\in I\}$ is a $(T,U)$-controlled g-frame for $\hs$.
	\end{enumerate}
\end{proposition}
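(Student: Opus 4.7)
The plan is to handle the two implications separately, using throughout the identity
$$\sum_{i\in I}\langle \Lambda_i Tf,\Lambda_i Uf\rangle = \langle S_\Lambda Tf, Uf\rangle = \langle U S_\Lambda Tf, f\rangle,$$
which follows from $U^{*}=U$.

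For part (1), the g-Bessel hypothesis built into the definition already supplies the g-frame upper bound, so all the work goes into producing a lower bound $\sum_i\|\Lambda_i f\|^2 \ge C\|f\|^2$. I would start from the controlled inequality and apply Cauchy--Schwarz to the inner product form: $A\|f\|^2 \le |\langle S_\Lambda Tf, Uf\rangle| \le \|U\|\,\|S_\Lambda Tf\|\,\|f\|$. Substituting $g = Tf$ (permissible since $T\in GL^{+}(\hs)$) and using $\|T^{-1}g\| \ge \|g\|/\|T\|$ yields $\|S_\Lambda g\| \ge \frac{A}{\|U\|\,\|T\|}\|g\|$ for every $g\in\hs$. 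Because $S_\Lambda$ is positive and self-adjoint, this pointwise norm bound upgrades to the operator inequality $S_\Lambda^2 \ge \bigl(\frac{A}{\|U\|\,\|T\|}\bigr)^{2} I_{\hs}$, and taking the operator square root via functional calculus gives $S_\Lambda \ge \frac{A}{\|U\|\,\|T\|} I_{\hs}$. Pairing this with $f$ produces the missing g-frame lower bound.

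For part (2), I would use the commutativity hypotheses to bring everything into self-adjoint form. Since $T$ commutes with $S_\Lambda$,
$$\sum_{i\in I}\langle \Lambda_i Tf,\Lambda_i Uf\rangle = \langle UT S_\Lambda f, f\rangle.$$
The commuting positive invertibles $T$ and $U$ multiply to $UT\in GL^{+}(\hs)$, and because $UT$ commutes with $S_\Lambda$ (hence with $S_\Lambda^{1/2}$), the product $UT\, S_\Lambda = S_\Lambda^{1/2} (UT) S_\Lambda^{1/2}$ is a positive operator. Applying Lemma~2.2 to $UT$ gives scalars $m_{UT},M_{UT}>0$ with $m_{UT} I_{\hs} \le UT \le M_{UT} I_{\hs}$, so that
$$m_{UT}\,\langle S_\Lambda f, f\rangle \le \langle UT S_\Lambda f, f\rangle \le M_{UT}\,\langle S_\Lambda f, f\rangle.$$
Inserting the g-frame bounds $A_\Lambda\|f\|^2 \le \langle S_\Lambda f,f\rangle \le B_\Lambda\|f\|^2$ yields the controlled bounds $m_{UT}A_\Lambda$ and $M_{UT}B_\Lambda$; the ambient g-Bessel assumption needed by the definition is inherited from $\Lambda$ being a g-frame.

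The main obstacle is really in part (1): the operator $U S_\Lambda T$ is generally not self-adjoint, so the controlled inequality does \emph{not} directly translate into $S_\Lambda \ge C\,I_{\hs}$ as a quadratic form. The key maneuver is the detour through the norm estimate $\|S_\Lambda g\|\gtrsim\|g\|$, after which the positivity and self-adjointness of $S_\Lambda$ do the rest. Once this is in hand, part (2) is essentially a bookkeeping exercise in functional calculus.
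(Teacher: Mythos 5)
Your proof is correct, but it diverges from the paper's in part (1), so a comparison is worthwhile. For part (2) you follow essentially the paper's route: the paper uses Lemma 2.2 to get $mI_{\hs}\le T\le MI_{\hs}$ and $m'I_{\hs}\le U^*\le M'I_{\hs}$ and multiplies the operator inequalities in two stages (first $mCI_{\hs}\le S_{\Lambda}T\le MDI_{\hs}$ using that $T$ commutes with $S_{\Lambda}$, then the sandwich with $U^*$); your one-step version with the commuting positive product $UT$ is the same argument, and your conjugation identity $UTS_{\Lambda}=S_{\Lambda}^{1/2}(UT)S_{\Lambda}^{1/2}$ actually makes explicit the justification the paper leaves tacit when it multiplies operator inequalities of non-commuting-looking products. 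For part (1) your route is genuinely different. The paper works through the controlled frame operator: from $AI_{\hs}\le S_{T\Lambda U}\le BI_{\hs}$ it takes $S_{T\Lambda U}$ to be invertible, factors $S_{\Lambda}=(U^*)^{-1}S_{T\Lambda U}T^{-1}$ to get $\|S_{\Lambda}^{-1}\|\le \frac{1}{A}\|T\|\,\|U^*\|$, concludes $S_{\Lambda}\in GL^{+}(\hs)$, and then invokes Lemma 2.2 to produce (non-explicit) bounds $CI_{\hs}\le S_{\Lambda}\le DI_{\hs}$. You bypass $S_{T\Lambda U}$ entirely: Cauchy--Schwarz gives the pointwise estimate $\|S_{\Lambda}g\|\ge \frac{A}{\|T\|\,\|U\|}\|g\|$, which you upgrade to $S_{\Lambda}\ge \frac{A}{\|T\|\,\|U\|}I_{\hs}$ via $S_{\Lambda}^2\ge c^2 I_{\hs}$ and operator monotonicity of the square root (equivalently, the spectral theorem, since $\sigma(S_{\Lambda})\subset[0,\infty)$ and the norm bound excludes $(-c,c)$ from the spectrum of a self-adjoint operator). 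Both arguments are sound, but yours buys two things: an explicit lower g-frame bound $A/(\|T\|\,\|U\|)$, which matches the constant implicit in the paper's $1/\|S_{\Lambda}^{-1}\|$ but is never stated there; and it sidesteps a point the paper glosses over, namely that before one can norm $S_{T\Lambda U}^{-1}$ one must argue that the controlled frame inequality forces the quadratic form of $S_{T\Lambda U}$ to be real, hence $S_{T\Lambda U}$ self-adjoint, positive and bounded below. Your closing observation --- that $US_{\Lambda}T$ need not be self-adjoint a priori, so the controlled inequality does not directly yield $S_{\Lambda}\ge CI_{\hs}$ as a form inequality and one must detour through the norm estimate --- identifies exactly the issue; the paper's factorization is shorter only once the invertibility of $S_{T\Lambda U}$ is granted.
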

\begin{proof}
	1. For $f\in\hs$, since the operator
	$$S_{\Lambda}(f)=(U^*)^{-1}S_{T\Lambda U}T^{-1}(f)=\sum_{i\in I}\Lambda^*_i\Lambda_if$$
	is well defined, we show that it is a bounded and invertible operator.  It is also a positive linear operator on $\hs$ because
	$$\left\langle S_{\Lambda}f,f\right\rangle =\sum_{i\in I}\|\Lambda_if\|^2.$$
	Hence, 
	$$\|S^{-1}_{\Lambda}\|=\|TS^{-1}_{T\Lambda U}U^*\|\le\|T\|\|S^{-1}_{T\Lambda U}\|\|U^*\|\le \frac{1}{A}\|T\|\|U^*\|,$$ which $A$ is the lower frame bound of  $(T,U)$-controlled g-frame $\{\Lambda_i:i\in I\}$.
	So $S_{\Lambda}\in GL^+(\hs)$. Therefore, by Lemma 2.2, we have $CI_{\hs}\le S_{\Lambda}\le DI_{\hs}$ for some $0<C\le D<\infty$. So the result holds.
	
	2. Let $\{\Lambda_i:i\in I\}$ be a g-frame with bounds $C,D$ and $m,m'>0$, $M,M'<\infty$ so that
	$$mI_{\hs}\le T\le MI_{\hs},~~~m'I_{\hs}\le U^*\le M'I_{\hs}.$$
	By Lemma 2.2, then we have 
	$$mCI_{\hs}\le S_{\Lambda}T\le MDI_{\hs}$$
	because $T$ commutes with $S_{\Lambda}$. Again $U^*$ commutes with $S_{\Lambda}T$ and then 
	$$mm'CI_{\hs}\le S_{T\Lambda U}\le MM'DI_{\hs.}$$
	This completes the proof.  
\end{proof}
\begin{corollary}
	Let $T,U\in GL(\hs)$ and $\{\Lambda_i:i\in I\}$ be a g-frame with frame operator $S_{\Lambda}$. If $U^*S_{\Lambda}T$ is positive, then $\{\Lambda_i:i\in I\}$ is a $(T,U)$-controlled g-frame for $\hs$.
\end{corollary}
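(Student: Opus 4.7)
The plan is to reduce the statement to a direct application of Lemma~2.2 via the identity
\[
\sum_{i\in I}\langle \Lambda_i T f,\Lambda_i U f\rangle \;=\; \langle U^*S_{\Lambda}Tf,f\rangle,
\]
which follows by moving $\Lambda_i^*$ inside the inner product, using the definition of $S_{\Lambda}$, and taking the adjoint of $U$. Once this identity is in place, the quadratic form in (2) is precisely the quadratic form associated with the operator $U^*S_{\Lambda}T$.

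Next I would verify that $U^*S_{\Lambda}T \in GL^+(\hs)$. Invertibility is immediate: since $T,U\in GL(\hs)$ we also have $U^*\in GL(\hs)$, and $S_{\Lambda}$ is invertible because $\{\Lambda_i\}$ is a g-frame; hence the product of three bounded invertible operators is bounded invertible. Combined with the hypothesis that $U^*S_{\Lambda}T$ is positive, this puts $U^*S_{\Lambda}T$ in $GL^+(\hs)$. Lemma~2.2 then supplies constants $0<A\le B<\infty$ with
\[
A I_{\hs}\;\le\;U^*S_{\Lambda}T\;\le\;B I_{\hs},
\]
and pairing with $f$ yields the two-sided bound in (2).

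Finally I would dispatch the Bessel requirement: by assumption $\{\Lambda_i\}$ is a g-frame, so in particular it is a g-Bessel sequence, which is the only remaining clause in Definition~2.1. Combining this with the bound above shows that $\{\Lambda_i\}$ is a $(T,U)$-controlled g-frame with the stated bounds $A$ and $B$.

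I do not anticipate a genuine obstacle here; the only subtlety is recognizing the bilinear-to-operator identity in the first step, after which everything reduces to invertibility of a product and one invocation of Lemma~2.2. (Note that the corollary does not require $T,U\in GL^+(\hs)$ as in Proposition~2.3; only the product $U^*S_{\Lambda}T$ needs to be positive, which is exactly what makes Lemma~2.2 applicable.)
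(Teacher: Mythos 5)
Your proposal is correct and is essentially the argument the paper intends: the corollary is stated without an explicit proof, but the text already records the identity $S_{T\Lambda U}=U^*S_{\Lambda}T$ and the equivalence in Lemma~2.2, so your route --- rewrite $\sum_{i\in I}\langle\Lambda_iTf,\Lambda_iUf\rangle=\langle U^*S_{\Lambda}Tf,f\rangle$, note that $U^*S_{\Lambda}T$ is invertible as a product of invertibles and positive by hypothesis, hence lies in $GL^+(\hs)$, and then read off $AI_{\hs}\le U^*S_{\Lambda}T\le BI_{\hs}$ --- is exactly the implicit one. Your attention to the g-Bessel clause (automatic since $\{\Lambda_i\}$ is a g-frame) and to the fact that $T,U$ need not individually be positive is also right.
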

The Theorem 2.8 of \cite{asgari2005frames} leads to the following result.
\begin{proposition}
	Let $T,U\in GL(\hs)$ and $\{\Lambda_i:i\in I\}$ be a $(T,U)$-controlled g-frame for $\hs$ with lower and upper bounds $A$ and $B$, respectively. Let $\{\Gamma_i:i\in I\}$ be a g-complete family of bounded operator. If there exists a number $0<R<A$ such that 
	$$0\le \sum_{i\in I}\left\langle U^*(\Lambda^*_i\Lambda_i-\Gamma^*_i\Gamma_i)Tf,f\right\rangle \le R\|f\|^2,~~\forall f\in\hs,$$
	then $\{\Gamma_i:i\in I\}$ is also a $(T,U)$-controlled g-frame for $\hs$.
\end{proposition}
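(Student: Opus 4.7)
The plan is to verify directly that $\{\Gamma_i\}$ satisfies the two-sided inequality (2) of Definition 2.1 with constants $A-R$ and $B$; the argument is essentially a linear rearrangement of the hypothesis. The key identity is
$$\sum_{i\in I}\langle U^*\Gamma_i^*\Gamma_i Tf, f\rangle = \sum_{i\in I}\langle U^*\Lambda_i^*\Lambda_i Tf, f\rangle - \sum_{i\in I}\langle U^*(\Lambda_i^*\Lambda_i - \Gamma_i^*\Gamma_i)Tf, f\rangle,$$
which is legitimate because both sums on the right converge: the first by the $(T,U)$-controlled g-frame property of $\{\Lambda_i\}$, the second by the hypothesis of the proposition.

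Next I would combine the given bounds $A\|f\|^2 \le \sum_i\langle U^*\Lambda_i^*\Lambda_i Tf, f\rangle \le B\|f\|^2$ with the assumption $0 \le \sum_i\langle U^*(\Lambda_i^*\Lambda_i - \Gamma_i^*\Gamma_i)Tf, f\rangle \le R\|f\|^2$. Subtracting in the identity above yields
$$(A-R)\|f\|^2 \le \sum_{i\in I}\langle U^*\Gamma_i^*\Gamma_i Tf, f\rangle \le B\|f\|^2.$$
Since $R<A$, the lower constant $A-R$ is strictly positive; rewriting each summand as $\langle \Gamma_i Tf, \Gamma_i Uf\rangle$ then delivers inequality (2) for $\{\Gamma_i\}$ with controlled g-frame bounds $A-R$ and $B$.

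The step I expect to need the most care is verifying the g-Bessel requirement built into Definition 2.1, namely $\sum_i \|\Gamma_i f\|^2 \le D\|f\|^2$ for some $D<\infty$. This does not follow mechanically from the two-sided controlled bound above, because $T\ne U$ in general and so $\langle \Gamma_i Tf, \Gamma_i Uf\rangle$ is not a sum of non-negative terms that can be collapsed to $\|\Gamma_i f\|^2$. I would handle this by combining the g-completeness of $\{\Gamma_i\}$, the invertibility of $T$ and $U$, and the weak convergence of $\sum_i U^*\Gamma_i^*\Gamma_i T$ furnished by the hypothesis (via polarization), invoking the same device used in the uncontrolled analogue cited by the authors as Theorem 2.8 of Asgari's paper, to promote weak convergence of the mixed sum to a uniform Bessel bound.
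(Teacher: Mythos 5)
Your proposal is correct, and its core --- the rearrangement
$\sum_{i\in I}\langle U^*\Gamma_i^*\Gamma_iTf,f\rangle=\sum_{i\in I}\langle U^*\Lambda_i^*\Lambda_iTf,f\rangle-\sum_{i\in I}\langle U^*(\Lambda_i^*\Lambda_i-\Gamma_i^*\Gamma_i)Tf,f\rangle$ --- is exactly the decomposition the paper uses, but you carry it out more carefully and go beyond the published argument in two respects. First, you use the sign hypothesis $0\le\sum_{i\in I}\langle U^*(\Lambda_i^*\Lambda_i-\Gamma_i^*\Gamma_i)Tf,f\rangle$ in both directions and obtain the bounds $(A-R,\,B)$, whereas the paper discards this information in the upper estimate and settles for the looser pair $(A-R,\,B+R)$; moreover, the paper's lower-bound chain contains a sign slip (it passes from $\Sigma_\Lambda+Z$ to $\Sigma_\Lambda-Z$ for a term $Z$ that is nonpositive under the hypothesis), which your direct subtraction avoids while reaching the same constant $A-R$. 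Second, you correctly observe that Definition 2.1 requires $\{\Gamma_i\}$ to be a g-Bessel sequence in addition to inequality (2), a requirement the paper's proof never verifies; this is a genuine gap in the published proof, not in yours. Your sketched repair is sound and can in fact be closed more simply than by appealing to Theorem 2.8 of Asgari's paper: by polarization the form $\phi(f,g)=\sum_{i\in I}\langle\Gamma_iTf,\Gamma_iUg\rangle$ converges for all $f,g$, is Hermitian and positive with $(A-R)\|f\|^2\le\phi(f,f)\le B\|f\|^2$, hence bounded by the Cauchy--Schwarz inequality for positive forms; substituting $f=T^{-1}h$ and $g=U^{-1}h$ yields $\sum_{i\in I}\|\Gamma_ih\|^2=\phi(T^{-1}h,U^{-1}h)\le B\|T^{-1}\|\,\|U^{-1}\|\,\|h\|^2$, which is the desired Bessel bound --- note that neither g-completeness nor compactness-type devices are needed for this step, only the invertibility of $T$ and $U$ that you already flagged as essential.
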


\begin{proof}
	Let $\{\Lambda_i:i\in I\}$ be a $(T,U)$-controlled g-frame for $\hs$ with frame bounds $A$ and $B$, for any $f\in\hs$, we have
	$$A\|f\|^2\le \sum_{i\in I}\left\langle U^*\Lambda^*_i\Lambda_iTf,f\right\rangle \le B\|f\|^2.$$	
	Hence
	\begin{align*}
		\sum_{i\in I}\left\langle U^*\Gamma^*_i\Gamma_iTf,f\right\rangle&=\sum_{i\in I}\left\langle U^*(\Gamma^*_i\Gamma_i-\Lambda^*_i\Lambda_i)Tf,f\right\rangle+\sum_{i\in I}\left\langle U^*\Lambda^*_i\Lambda_iTf,f\right\rangle\\
		&\le R\|f\|^2+B\|f\|^2=(R+B)\|f\|^2.
	\end{align*}
	On the other hand
	\begin{align*}
		\sum_{i\in I}\left\langle U^*\Gamma^*_i\Gamma_iTf,f\right\rangle&=\sum_{i\in I}\left\langle U^*\Lambda^*_i\Lambda_iTf,f\right\rangle+\sum_{i\in I}\left\langle U^*(\Gamma^*_i\Gamma_i-\Lambda^*_i\Lambda_i)Tf,f\right\rangle\\
		&\ge \sum_{i\in I}\left\langle U^*\Lambda^*_i\Lambda_iTf,f\right\rangle-\sum_{i\in I}\left\langle U^*(\Gamma^*_i\Gamma_i-\Lambda^*_i\Lambda_i)Tf,f\right\rangle\\
		&\ge A\|f\|-R\|f\|^2=(A-R)\|f\|^2>0.
	\end{align*}
	So we have the result.  
\end{proof}

\begin{proposition}
	Let $T,U\in GL(\hs)$ and $\{\Lambda_i:i\in I\}$ be a $(T,U)$-controlled g-frame for $\hs$. Let $\{\Gamma_i:i\in I\}$ be a g-complete family of bounded operator. Suppose that $\Phi:\hs\longrightarrow\hs$ defined by
	$$\Phi(f)=\sum_{i\in I}U^*(\Gamma^*_i\Gamma_i-\Lambda^*_i\Lambda_i)Tf,~~\forall f\in\hs,$$
	is a positive and compact operator. Then $\{\Gamma_i:i\in I\}$ is a $(T,U)$-controlled g-frame for $\hs$.
\end{proposition}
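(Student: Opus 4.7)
The plan is to reduce everything to the operator identity
\[
\sum_{i\in I}U^{*}\Gamma_i^{*}\Gamma_i Tf \;=\; \Phi(f) + S_{T\Lambda U}f,\qquad f\in\hs,
\]
obtained by adding and subtracting $\sum_i U^{*}\Lambda_i^{*}\Lambda_i Tf$ inside $\Phi$. Both summands on the right are well-defined bounded operators on $\hs$ ($\Phi$ because it is compact and hence bounded, and $S_{T\Lambda U}$ because $\{\Lambda_i\}$ is a $(T,U)$-controlled g-frame), so the left-hand side defines a bounded operator $S_{T\Gamma U}$ on $\hs$ which will play the role of the $(T,U)$-controlled g-frame operator of $\{\Gamma_i\}$.

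For the frame inequality, I would pair this identity with $f$ and use that $\Phi\ge 0$ together with $\|\Phi\|<\infty$:
\[
A\|f\|^{2} \;\le\; \langle S_{T\Lambda U}f,f\rangle \;\le\; \sum_{i\in I}\langle U^{*}\Gamma_i^{*}\Gamma_i Tf,f\rangle \;\le\; (B+\|\Phi\|)\|f\|^{2}.
\]
This gives the two controlled frame bounds at once: positivity of $\Phi$ supplies the lower bound $A$, while compactness (entering only through the bound $\|\Phi\|$) supplies the upper bound $B+\|\Phi\|$.

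The remaining point is to verify that $\{\Gamma_i\}$ is a g-Bessel sequence, as required by Definition 2.1. Applying the bounded invertible operator $(U^{*})^{-1}$ to the key identity, and then writing $g=Tf$ (using $T\in GL(\hs)$), I get that $\sum_i\Gamma_i^{*}\Gamma_i g$ converges in $\hs$ for every $g$. The partial-sum operators $S_N g:=\sum_{i\le N}\Gamma_i^{*}\Gamma_i g$ are bounded and converge strongly, so Banach--Steinhaus yields $B_0:=\sup_N\|S_N\|<\infty$, and then $\sum_i\|\Gamma_i g\|^{2}=\lim_N\langle S_N g,g\rangle\le B_0\|g\|^{2}$, establishing the Bessel bound.

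The main obstacle I expect is precisely this last step: the hypothesis describes the mixed quantity $U^{*}(\Gamma_i^{*}\Gamma_i-\Lambda_i^{*}\Lambda_i)T$ rather than $\Gamma_i^{*}\Gamma_i$ itself, yet the controlled g-frame definition still requires a plain Bessel bound. The bridge is the invertibility of $T$ and $U$ combined with a uniform boundedness argument; positivity and compactness of $\Phi$ by themselves handle only the two-sided controlled inequality.
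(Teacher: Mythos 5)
Your proof is correct, and it takes a genuinely different---and leaner---route than the paper's. The paper conjugates $\Phi$ back to the uncontrolled setting: it sets $\Psi=(U^*)^{-1}\Phi T^{-1}$, observes that $\Theta:=S_{\Lambda}+\Psi$ satisfies $\Theta f=\sum_{i\in I}\Gamma_i^*\Gamma_i f$, and then invests most of its effort in proving $\Theta$ invertible---compactness of $\Psi$ enters through Theorem 2.8 of \cite{asgari2005frames} to give closed range, and g-completeness of $\{\Gamma_i\}$ gives injectivity---so that $\{\Gamma_i\}$ is first shown to be a plain g-frame with lower bound $(B+\|\Psi\|)^{-1}\|\Theta^{-1}\|^{-2}$; the controlled property is then deduced from positivity and invertibility of $U^*S_{\Gamma}T=\Phi+U^*S_{\Lambda}T$, as in Corollary 2.4. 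You instead read the controlled inequality directly off the identity $\sum_{i\in I}U^*\Gamma_i^*\Gamma_iTf=\Phi f+S_{T\Lambda U}f$: positivity of $\Phi$ hands you the lower bound $A$ unchanged, boundedness the upper bound $B+\|\Phi\|$, and you discharge the g-Bessel requirement built into Definition 2.3 by Banach--Steinhaus (in fact the direct estimate $\sum_{i\in I}\Gamma_i^*\Gamma_i=S_{\Lambda}+(U^*)^{-1}\Phi T^{-1}$ gives $B_0\le B+\|(U^*)^{-1}\|\,\|\Phi\|\,\|T^{-1}\|$ at once and would spare you the uniform-boundedness appeal). Your closing observation is exactly right and worth emphasizing: compactness enters your argument only through $\|\Phi\|<\infty$, and g-completeness of $\{\Gamma_i\}$ is never used, so you prove a strictly stronger statement with explicit bounds $(A,\,B+\|\Phi\|)$, whereas both hypotheses are load-bearing in the paper's proof. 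What the paper's heavier machinery buys is extra information your argument does not deliver: invertibility of $S_{\Gamma}$ (hence of $U^*S_{\Gamma}T$) and the conclusion that $\{\Gamma_i\}$ is itself an \emph{uncontrolled} g-frame with a quantified lower bound---a conclusion that does not follow from the controlled inequality alone for general $T,U\in GL(\hs)$, since Proposition 2.3 requires $T,U\in GL^+(\hs)$ to pass from controlled g-frames back to g-frames.
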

\begin{proof}
	Let $\{\Lambda_i:i\in I\}$ be a $(T,U)$-controlled g-frame for $\hs$. By Proposition 2.3, then it is a g-frame for $\hs$ with bounds $A$ and $B$. On the other hand, since $\Phi$ is a positive compact operator, $U^{-1}\Phi T^{-1}$ is also a positive compact operator. Hence
	$$(U^*)^{-1}\Phi T^{-1}f=\sum_{i\in I}\Gamma^*_i\Gamma_if-\Lambda^*_i\Lambda_if,~~~\forall f\in\hs.$$	
	Let $\Psi=(U^*)^{-1}\Phi T^{-1}$ and $\Theta:\hs\longrightarrow\hs$ be an operator defined by 
	$$\Theta=S_{\Lambda}+\Psi.$$
	A simple computation shows that $\Psi$ is bounded and self-adjoint and $\Theta$ is bounded, linear, self-adjoint and 
	$$\Theta f=\sum_{i\in  I}\Gamma_i^*\Gamma_if,$$
	for any $f\in\hs$. Let $f$ be an arbitrary element of $\hs$, we have
	$$\|\Theta f\|=\|S_{\Lambda}f+\Psi f\|\le \|S_{\Lambda}f\|+\|\Psi f\|\le (B+\|\Psi\|)\|f\|.$$
	Therefore,
	$$\sum_{i\in I}\|\Gamma_if\|^2=\left\langle \Theta f,f\right\rangle \le (B+\|\Psi\|)\|f\|^2.$$
	Since $\Psi$ is a compact operator, $\Psi S^{-1}_{\Lambda}$ is also a compact operator on $\hs$. By Theorem 2.8 of \cite{asgari2005frames}, $\Theta$ has closed range. Now we show that $\Theta$ is injective. Let $g$ be an element of $\hs$ such that $\Theta g=0$, then 
	$$\sum_{i\in I}\|\Gamma_ig\|^2=\left\langle \Theta g,g\right\rangle =0.$$
	Hence $\Gamma_ig=0$ for each $i\in I$. Since $\{\Gamma_i\in L(\hs,\hs_i):i\in I\}$ is g-complete, we have $g=0$. Furthermore, we have
	$${\rm Range}(\Theta)=(N(\Theta^*))^{\perp}=N(\Theta)^{\perp}=\hs.$$
	Hence $\Theta$ is onto and therefore invertible on $\hs$. Similar to the proof of Theorem 2.8 of \cite{asgari2005frames}, we have 
	$$\sum_{i\in I}\|\Gamma_ig\|^2\ge(B+\|\Psi\|)^{-1}\|\Theta^{-1}\|^{-2}\|f\|^2.$$
	Then $\{\Gamma_i:i\in I\}$ is a g-frame for $\hs$.
	Since $\Phi=U^*S_{\Gamma}T-U^*S_{\Lambda}T$, $U^*S_{\Gamma}T=\Phi+U^*S_{\Lambda}T$. It is easy to see that $U^*S_{\Gamma}T$ is a bounded positive operator. Hence, we have $\{\Gamma_i:i\in I\}$  is a $(T,U)$-controlled g-frame for $\hs$. 	
\end{proof}	

Our next result is a generalization of Theorem 3.3 of \cite{casazza2008fusion}.
\begin{theorem}
	Let $T,U\in GL(\hs)$ and $\{\Lambda_i\in L(\hs,\hs_i):i\in I\}$ be a family of bounded operators. Let $\{\Gamma_{ij}\in L(\hs_i,\hs_{ij}):j\in J_i\}$ be a $(T,U)$-controlled g-frame for each $\hs_i$ with bounds $C_i$ and $D_i$, which $0<C\le C_i\le D_i\le D<\infty$. Then the following conditions are equivalent.
	\begin{enumerate}
		\item $\{\Lambda_i\in L(\hs,\hs_i):i\in I\}$ is a $(T,U)$-controlled g-frame for $\hs$.
		\item  $\{\Gamma_{ij}\Lambda_i\in L(\hs_i,\hs_{ij}):i\in I,~j\in J_i\}$ is a $(T,U)$-controlled g-frame for $\hs$.
	\end{enumerate}
\end{theorem}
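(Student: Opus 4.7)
The plan is to establish the equivalence through a single sandwich estimate that compares the double sum $\sum_{i,j}\langle\Gamma_{ij}\Lambda_iTf,\Gamma_{ij}\Lambda_iUf\rangle$ with the single sum $\sum_i\langle\Lambda_iTf,\Lambda_iUf\rangle$ for every $f\in\hs$. I would write $S_{\Gamma_i}=\sum_{j\in J_i}\Gamma_{ij}^*\Gamma_{ij}$ for the $g$-frame operator of the $i$th inner family on $\hs_i$. By Proposition~2.3, each inner family is in particular an ordinary $g$-frame for $\hs_i$; the uniform hypothesis $0<C\le C_i\le D_i\le D<\infty$ then translates into uniform operator estimates $C'\,I_{\hs_i}\le S_{\Gamma_i}\le D'\,I_{\hs_i}$, where $C',D'>0$ depend only on $C,D$ and on $\|T^{\pm1}\|,\|U^{\pm1}\|$.

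The first computational step is the interchange-of-summation identity
\[
\sum_{i\in I}\sum_{j\in J_i}\langle\Gamma_{ij}\Lambda_iTf,\Gamma_{ij}\Lambda_iUf\rangle=\Big\langle U^*\Big(\sum_{i\in I}\Lambda_i^*S_{\Gamma_i}\Lambda_i\Big)Tf,\,f\Big\rangle,\qquad f\in\hs,
\]
together with the operator bound obtained from the fact that $X\mapsto\Lambda_i^*X\Lambda_i$ preserves positivity: the estimate $C'\,I_{\hs_i}\le S_{\Gamma_i}\le D'\,I_{\hs_i}$ gives $C'\Lambda_i^*\Lambda_i\le\Lambda_i^*S_{\Gamma_i}\Lambda_i\le D'\Lambda_i^*\Lambda_i$ on $\hs$, and summation in $i$ yields $C'S_\Lambda\le\sum_i\Lambda_i^*S_{\Gamma_i}\Lambda_i\le D'S_\Lambda$. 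Combined with the controlled frame operator identity $S_{T\Lambda U}=U^*S_\Lambda T$, this produces the sandwich
\[
C'\sum_{i\in I}\langle\Lambda_iTf,\Lambda_iUf\rangle\le\sum_{i\in I}\sum_{j\in J_i}\langle\Gamma_{ij}\Lambda_iTf,\Gamma_{ij}\Lambda_iUf\rangle\le D'\sum_{i\in I}\langle\Lambda_iTf,\Lambda_iUf\rangle.
\]

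With the sandwich in hand, both directions are immediate. For $(1)\Rightarrow(2)$, substituting $A\|f\|^2\le\sum_i\langle\Lambda_iTf,\Lambda_iUf\rangle\le B\|f\|^2$ shows that $\{\Gamma_{ij}\Lambda_i\}$ is a $(T,U)$-controlled $g$-frame with bounds $AC'$ and $BD'$. For $(2)\Rightarrow(1)$, inverting the sandwich converts the hypothesized bounds $A',B'$ on the double sum into bounds $A'/D'$ and $B'/C'$ for $\{\Lambda_i\}$. The $g$-Bessel requirement in the definition of $(T,U)$-controlled $g$-frame is inherited in either direction from the corresponding upper bound.

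The main technical obstacle is the transfer from the operator inequality $C'S_\Lambda\le\sum_i\Lambda_i^*S_{\Gamma_i}\Lambda_i\le D'S_\Lambda$ to the sesquilinear sandwich when $T\ne U$, since an operator ordering $P\le Q$ on positive self-adjoint $P,Q$ does not automatically force $\langle U^*PTf,f\rangle\le\langle U^*QTf,f\rangle$. To handle this I would work throughout at the level of the combined controlled frame operators $S_{T\Gamma\Lambda U}=U^*\bigl(\sum_i\Lambda_i^*S_{\Gamma_i}\Lambda_i\bigr)T$ and $S_{T\Lambda U}=U^*S_\Lambda T$, using the fact that the $(T,U)$-controlled hypothesis is precisely the assertion that the quadratic form $f\mapsto\langle S_{T\Lambda U}f,f\rangle$ is real, positive, and two-sidedly bounded; the order comparison of the unconjugated cores $\sum_i\Lambda_i^*S_{\Gamma_i}\Lambda_i$ and $S_\Lambda$ then propagates, through the common conjugation template $U^*(\cdot)T$ applied symmetrically to the quadratic form, to the required comparison of the two controlled frame quadratic forms with constants $C'$ and $D'$.
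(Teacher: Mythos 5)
Your route is at bottom the same as the paper's: the paper proves both implications by the per-index sandwich $C_i\langle\Lambda_iTf,\Lambda_iUf\rangle\le\sum_{j\in J_i}\langle\Gamma_{ij}^*\Gamma_{ij}\Lambda_iTf,\Lambda_iUf\rangle\le D_i\langle\Lambda_iTf,\Lambda_iUf\rangle$, which is exactly your aggregated inequality $C'S_{\Lambda}\le\sum_{i}\Lambda_i^*S_{\Gamma_i}\Lambda_i\le D'S_{\Lambda}$ read termwise. The genuine gap in your proposal is the step you yourself flag in the final paragraph and then never close. Put $R:=\sum_{i\in I}\Lambda_i^*S_{\Gamma_i}\Lambda_i-C'S_{\Lambda}$, which is indeed a positive operator by your (correct) summation argument. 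What your sandwich requires is $\langle U^*RTf,f\rangle=\langle R^{1/2}Tf,R^{1/2}Uf\rangle\ge0$ for all $f\in\hs$, and for general $T\ne U$ in $GL(\hs)$ this number need not be nonnegative, or even real: positivity of $R$ controls only the diagonal form $\langle Rg,g\rangle$, and the conjugation $X\mapsto U^*XT$ preserves operator order only in the symmetric case $U=T$, where it is the standard congruence $X\mapsto T^*XT$. Your closing sentence, that the order ``propagates through the common conjugation template applied symmetrically to the quadratic form,'' is a restatement of the desired conclusion, not an argument; no such propagation principle exists. A concrete obstruction: let $R$ be the rank-one projection $g\mapsto\langle g,e\rangle e$; then $\langle U^*RTf,f\rangle=\langle Tf,e\rangle\overline{\langle Uf,e\rangle}$, which for suitable invertible $T,U$ and suitable $f$ is negative or non-real.

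Two further remarks. First, you have in fact located a defect that the paper's own proof commits silently: its inequality $\sum_{j}\langle\Gamma_{ij}^*\Gamma_{ij}\Lambda_iTf,\Lambda_iUf\rangle\le D_i\langle\Lambda_iTf,\Lambda_iUf\rangle$ compares two generally complex numbers, applying a hypothesis about diagonal pairs $(Tg,Ug)$ to the off-diagonal pair $(\Lambda_iTf,\Lambda_iUf)$; indeed the theorem statement itself has a type mismatch, since $T,U\in GL(\hs)$ cannot literally control a g-frame for $\hs_i$, and what both proofs actually use is the operator inequality $C_iI_{\hs_i}\le S_{\Gamma_i}\le D_iI_{\hs_i}$. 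Both arguments become valid under extra hypotheses, e.g.\ $T=U$ (every pairing becomes a squared norm, and the whole sandwich goes through verbatim), or $T,U\in GL^+(\hs)$ commuting with the relevant frame operators as in Proposition 2.3(2), which force each term $\langle\Lambda_iTf,\Lambda_iUf\rangle$ to be nonnegative. Second, a smaller slip: you invoke Proposition 2.3 to get $C'I_{\hs_i}\le S_{\Gamma_i}\le D'I_{\hs_i}$, but its hypothesis is $T,U\in GL^+(\hs)$, not $GL(\hs)$; the quantitative conclusion you want is still derivable directly (from $C_i\|g\|^2\le\abs{\langle S_{\Gamma_i}Tg,Ug\rangle}$ one gets that the positive operator $S_{\Gamma_i}$ is bounded below by $C_i\|T\|^{-1}\|U\|^{-1}$, and the upper bound comes from the g-Bessel part of the definition), but the citation as given does not apply.
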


\begin{proof}
	1$\rightarrow$2.	Let $\{\Lambda_i\in L(\hs,\hs_i):i\in I\}$ be a $(T,U)$-controlled g-frame with bounds ($A,B$) for $\hs$. Then for all $f\in\hs$ we have
	\begin{align*}
		\sum_{i\in I}\sum_{j\in J_i}\left\langle \Gamma_{ij}\Lambda_iTf,\Gamma_{ij}\Lambda_iUf\right\rangle&=\sum_{i\in I}\sum_{j\in J_i}\left\langle \Gamma^*_{ij}\Gamma_{ij}\Lambda_iTf,\Lambda_iUf\right\rangle\\
		&\le \sum_{i\in I}D_i\left\langle \Lambda_iTf,\Lambda_iUf\right\rangle\le DB\|f\|^2.
	\end{align*}
	Also we have 
	\begin{align*}
		\sum_{i\in I}\sum_{j\in J_i}\left\langle \Gamma_{ij}\Lambda_iTf,\Gamma_{ij}\Lambda_iUf\right\rangle&=\sum_{i\in I}\sum_{j\in J_i}\left\langle \Gamma^*_{ij}\Gamma_{ij}\Lambda_iTf,\Lambda_iUf\right\rangle\\
		&\ge \sum_{i\in I}C_i\left\langle \Lambda_iTf,\Lambda_iUf\right\rangle\ge CA\|f\|^2.
	\end{align*}
	
	2$\rightarrow$1. Let $\{\Gamma_{ij}\Lambda_i\in L(\hs_i,\hs_{ij}):i\in I,~j\in J_i\}$ be a $(T,U)$-controlled g-frame with bounds $A,B$ for $\hs$. Since $\Lambda_if\in\hs_i$, we have 
	$$\sum_{i\in I}\left\langle \Lambda_iTf,\Lambda_iUf\right\rangle\le \sum_{i\in I}\frac{1}{C_i}\sum_{j\in J_i}\left\langle \Gamma_{ij}\Lambda_iTf,\Gamma_{ij}\Lambda_iUf\right\rangle\le \frac{B}{C}\|f\|^2.$$
	Also
	$$\sum_{i\in I}\left\langle \Lambda_iTf,\Lambda_iUf\right\rangle\ge \sum_{i\in I}\frac{1}{D_i}\sum_{j\in J_i}\left\langle \Gamma_{ij}\Lambda_iTf,\Gamma_{ij}\Lambda_iUf\right\rangle\ge \frac{A}{D}\|f\|^2.$$ 
\end{proof}
The next result is a characterization for $(T,U)$-controlled g-frames.
\begin{theorem}
	Let $T,U\in GL(\hs)$ and $\{\Lambda_i\in L(\hs,\hs_i):i\in I\}$ be a family of bounded operators. Suppose that $\{e_{ij}:j\in J_i\}$ is an orthonormal basis for $\hs_i$ for each $i\in I$. Then $\{\Lambda_i:i\in I\}$ is a $(T,U)$-controlled g-frame for $\hs$ if and only if $\{T^*u_{ij}:i\in I,j\in J_i\}$ is a $U^*(T^*)^{-1}$-controlled frame for $\hs$, where $u_{ij}=\Lambda^*_ie_{ij}$.
\end{theorem}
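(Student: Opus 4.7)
The plan is to reduce the $(T,U)$-controlled g-frame inequality for $\{\Lambda_i\}$ to a controlled frame inequality for $\{T^*u_{ij}\}$ by expanding $\Lambda_iTf$ and $\Lambda_iUf$ against the orthonormal basis $\{e_{ij}\}_{j\in J_i}$ of $\hs_i$ and applying Parseval. Using $\langle \Lambda_i g, e_{ij}\rangle = \langle g, \Lambda_i^*e_{ij}\rangle = \langle g, u_{ij}\rangle$ for any $g\in\hs$, Plancherel inside $\hs_i$ yields the scalar identity
$$\langle \Lambda_iTf, \Lambda_iUf\rangle = \sum_{j\in J_i}\langle Tf, u_{ij}\rangle\langle u_{ij}, Uf\rangle = \sum_{j\in J_i}\langle f, T^*u_{ij}\rangle\langle U^*u_{ij}, f\rangle.$$
Setting $V := U^*(T^*)^{-1}$, which is legitimate because $T\in GL(\hs)$ forces $T^*\in GL(\hs)$, one has $U^*u_{ij} = V(T^*u_{ij})$. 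Summing over $i\in I$ therefore produces the pointwise identity
$$\sum_{i\in I}\langle \Lambda_iTf,\Lambda_iUf\rangle = \sum_{i\in I}\sum_{j\in J_i}\langle f, T^*u_{ij}\rangle\langle V(T^*u_{ij}), f\rangle,$$
valid for every $f\in\hs$. Because the two sides are literally equal, the bilateral bounds $A\|f\|^2\le(\cdot)\le B\|f\|^2$ transfer verbatim from one definition to the other, with the same constants.

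Next I would dispose of the Bessel prerequisites that are built into both notions. A second application of Parseval gives $\|\Lambda_if\|^2 = \sum_j|\langle f, u_{ij}\rangle|^2$, so $\{\Lambda_i\}$ is g-Bessel iff $\{u_{ij}\}_{i,j}$ is a Bessel sequence in $\hs$; and since $T^*$ is a bounded operator with bounded inverse, this is in turn equivalent to $\{T^*u_{ij}\}$ being Bessel in $\hs$. Combined with the identity above, this yields both implications of the stated equivalence symmetrically.

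The absolute convergence needed to pull the $j$-sum outside the inner product is routine: Cauchy--Schwarz in $\hs_i$ gives $\sum_j|\langle Tf,u_{ij}\rangle\langle u_{ij},Uf\rangle|\le \|\Lambda_iTf\|\|\Lambda_iUf\|$, and a further Cauchy--Schwarz across $i$ together with the g-Bessel estimate dominates the double sum by $B\|Tf\|\|Uf\|$. I do not expect a serious obstacle; the substantive content of the proof is really the single algebraic observation $U^*u_{ij} = V(T^*u_{ij})$ with $V=U^*(T^*)^{-1}$, which is the reparametrisation that allows a single controller on the transported family $\{T^*u_{ij}\}$ to reproduce the effect of the two controllers $T,U$ acting on the original $\{\Lambda_i\}$.
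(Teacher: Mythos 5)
Your proposal is correct and follows essentially the same route as the paper's own proof: expand $\Lambda_iTf$ and $\Lambda_iUf$ against the orthonormal basis $\{e_{ij}\}$, obtain the identity $\langle \Lambda_iTf,\Lambda_iUf\rangle=\sum_{j\in J_i}\langle f,T^*u_{ij}\rangle\langle U^*u_{ij},f\rangle$, and reparametrise via $f_{ij}=T^*u_{ij}$ and $\Omega=U^*(T^*)^{-1}$ so the two inequalities coincide with the same constants. Your additional verification of the g-Bessel equivalence and of the absolute convergence justifying the interchange is a small refinement the paper leaves implicit, but it does not change the argument.
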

\begin{proof}
	Let $\{e_{ij}:j\in J_i\}$ be an orthonormal basis for $\hs_i$ for each $i\in I$. For any $f\in\hs$, since $\Lambda_if\in\hs_i$, we have
	$$\Lambda_i(Tf)=\sum_{j\in J_i}\left\langle \Lambda_i(Tf),e_{ij}\right\rangle e_{ij}=\sum_{j\in J_i}\left\langle f,T^*\Lambda^*_ie_{ij}\right\rangle e_{ij}.$$
	Also,
	$$\Lambda_i(Uf)=\sum_{j\in J_i}\left\langle \Lambda_i(Uf),e_{ij}\right\rangle e_{ij}=\sum_{j\in J_i}\left\langle f,U^*\Lambda^*_ie_{ij}\right\rangle e_{ij}.$$
	Hence,
	$$\left\langle \Lambda_iTf,\Lambda_iUf\right\rangle =\sum_{j\in J_i}\left\langle f,T^*\Lambda^*_ie_{ij}\right\rangle \left\langle U^*\Lambda^*e_{ij},f\right\rangle .$$
	Now, if we take $u_{ij}=\Lambda^*_ie_{ij}$, $f_{ij}=T^*u_{ij}$ and $\Omega=U^*(T^*)^{-1}$, then 
	$$A\|f\|^2\le \sum_{i\in I}\left\langle \Lambda_iTf,\Lambda_iUf\right\rangle \le B\|f\|^2$$
	is equivalent to 
	$$A\|f\|\le\sum_{i\in I}\sum_{j\in J_i}\left\langle f,f_{ij}\right\rangle \left\langle \Omega f_{ij},f\right\rangle \le B\|f\|^2.$$
	So we have the result.		
\end{proof}

Note that $\{u_{ij}:i\in I,j\in J_i\}$ is the sequence induced by $\{\Lambda_i:i\in I\}$ with respect to $\{e_{ij}:j\in J_i\}$.

By the above result, finding suitable operator $T$ and $U$ such that $\{\Lambda_i:i\in I\}$ forms a $(T,U)$-controlled fusion frame for $\hs$ with optimal bounds, is equivalent to finding suitable operators $T$ and $U$ such that $\{T^*u_{ij}:i\in I,j\in J_i\}$ is a $U^*(T^*)^{-1}$-controlled frame for $\hs$ with optimal frame bounds.

Let $\hs$ and $\ks$ be two Hilbert spaces. We recall that $\hs\oplus\ks=\{(f,g):f\in\hs,g\in\ks\}$, is a Hilbert space with pointwise operations and inner product
$$\left\langle (f,g),(f',g')\right\rangle :=\left\langle f,f'\right\rangle _{\hs}+\left\langle g,g'\right\rangle _{\ks},~~\forall f,f'\in\hs,g,g'\in\ks.$$
Also if $\Lambda\in L(\hs, V)$ and $\Gamma\in L(\ks, W)$, then for all $f\in\hs, g\in\ks$ we define
$$\Lambda\oplus\Gamma\in L(\hs\oplus\ks,V\oplus W),~{\rm by}~(\Lambda\oplus\Gamma)(Tf,Ug):=(\Lambda Tf,\Gamma Ug),$$
where $V,W$ are Hilbert spaces and $T\in GL(\hs),~U\in GL(\ks)$.

\begin{theorem}
	Let $T\in GL(\hs),~U\in GL(\hs)$. Let $\{\Lambda_i\in L(\hs, V_i):i\in I\}$ and $\{\Gamma_i\in L(\ks, W_i):i\in I\}$ be $(T,T)$-controlled  g-frame with bounds $(A,B)$ and $(U,U)$-controlled g-frame with bounds $(C,D)$, respectively. Then $\{\Lambda_i\oplus\Gamma_i\in L(\hs \oplus\ks, V_i\oplus W_i):i\in I\}$ is a $(T,U)$-controlled g-frame with bounds $(\min\{A,C\},\max\{B,D\})$. 
\end{theorem}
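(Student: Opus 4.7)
The plan is a direct verification that splits cleanly along the direct-sum structure. First I would fix an arbitrary vector $(f,g)\in\hs\oplus\ks$ and write down what we need to bound. Interpreting the controllers $T$ and $U$ in the conclusion as acting on $\hs\oplus\ks$ via the diagonal extension $(f,g)\mapsto(Tf,Ug)$ (which is the only interpretation consistent with the preceding definition of $\Lambda\oplus\Gamma$), the middle term of the controlled g-frame inequality becomes
\begin{equation*}
\sum_{i\in I}\left\langle (\Lambda_i\oplus\Gamma_i)(Tf,Ug),(\Lambda_i\oplus\Gamma_i)(Tf,Ug)\right\rangle .
\end{equation*}

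Next I would apply the formula $(\Lambda_i\oplus\Gamma_i)(Tf,Ug)=(\Lambda_iTf,\Gamma_iUg)$ and unfold the inner product on $V_i\oplus W_i$ to split the sum into two independent pieces:
\begin{equation*}
\sum_{i\in I}\|\Lambda_iTf\|^2+\sum_{i\in I}\|\Gamma_iUg\|^2.
\end{equation*}
Here I am using that for the $(T,T)$- and $(U,U)$-controlled g-frames in the hypothesis, the bilinear form collapses to the norm-square, since $\langle \Lambda_iTf,\Lambda_iTf\rangle=\|\Lambda_iTf\|^2$ and similarly on the $\ks$-side.

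Now I would invoke the two hypotheses componentwise: the $(T,T)$-controlled g-frame bounds for $\{\Lambda_i\}$ give $A\|f\|^2\le\sum_i\|\Lambda_iTf\|^2\le B\|f\|^2$, and the $(U,U)$-controlled g-frame bounds for $\{\Gamma_i\}$ give $C\|g\|^2\le\sum_i\|\Gamma_iUg\|^2\le D\|g\|^2$. Adding these and using $\|(f,g)\|^2=\|f\|^2+\|g\|^2$ together with the trivial estimates $A\|f\|^2+C\|g\|^2\ge\min\{A,C\}\|(f,g)\|^2$ and $B\|f\|^2+D\|g\|^2\le\max\{B,D\}\|(f,g)\|^2$ produces exactly the claimed bounds. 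The g-Bessel property of the combined family is inherited from the upper bound just derived.

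There is no real obstacle; the only subtlety worth flagging is the notational one in the statement (the controller in the direct-sum frame must be read as the diagonal operator on $\hs\oplus\ks$ built from $T$ on the first factor and $U$ on the second), and once this is pinned down every step is a one-line calculation.
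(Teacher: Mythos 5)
Your proposal is correct and follows essentially the same route as the paper's own proof: fix $(f,g)\in\hs\oplus\ks$, use $(\Lambda_i\oplus\Gamma_i)(Tf,Ug)=(\Lambda_iTf,\Gamma_iUg)$ to split the sum into $\sum_i\|\Lambda_iTf\|^2+\sum_i\|\Gamma_iUg\|^2$, apply the componentwise bounds, and combine via $\min\{A,C\}$ and $\max\{B,D\}$. Your explicit remarks that the controller must be read as the diagonal operator on $\hs\oplus\ks$ and that the $(T,T)$- and $(U,U)$-controlled forms collapse to norm-squares are points the paper leaves implicit (and its displayed computation even contains small typos such as $\langle\Lambda_iTf,\Lambda_if\rangle$ and $\|\Gamma_iUf\|^2$ where $\|\Gamma_iUg\|^2$ is meant), so your write-up is, if anything, slightly more careful.
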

\begin{proof}
	Let $(f,g)$ be an arbitrary element of $\hs\oplus\ks$. Then we have
	\begin{align*}
		\sum_{i\in I}\|(\Lambda_i\oplus\Gamma_i)(Tf,Ug)\|^2&=\sum_{i\in I}\left\langle(\Lambda_i\oplus\Gamma_i)(Tf,Ug),(\Lambda_i\oplus\Gamma_i)(Tf,Ug)\right\rangle  \\
		&= \sum_{i\in I}\left\langle (\Lambda_iTf,\Gamma_iUg),(\Lambda_iTf,\Gamma_iUg)\right\rangle \\
		&=\sum_{i\in I} \left\langle \Lambda_iTf,\Lambda_if\right\rangle +\left\langle \Gamma_iUg,\Gamma_iUg\right\rangle \\
		&=\sum_{i\in I}\|\Lambda_i Tf\|^2+\sum_{i\in I}\|\Gamma_iUf\|^2\\
		&\le B\|f\|^2+D\|g\|^2\\
		&\le\max\{B,D\}(\|f\|^2+\|g\|^2)=\max\{B,D\}\|(f,g)\|^2.
	\end{align*}
	Similarly we have
	$$\min\{A,C\}(\|f\|^2+\|g\|^2)\le 	\sum_{i\in I}\|(\Lambda_i\oplus\Gamma_i)(Tf,Ug)\|^2.$$
	So we have the result. 
\end{proof}
\section{Resolutions of the identity}
In this section, we will find new resolution of the identity. In fact, let $T,U\in GL(\hs)$ and $\{\Lambda_i\in L(\hs,\hs_i):i\in I\}$ be a $(T,U)$-controlled g-frame, then we have
$$f=\sum_{i\in I}S^{-1}_{T\Lambda U}U^*\Lambda^*_i\Lambda_iTf=\sum_{i\in I}U^*\Lambda^*_i\Lambda_iTS^{-1}_{T\Lambda U}f,~~\forall f\in\hs.$$
By choosing suitable controlled operators we may obtain more suitable approximations. Now we will give a new resolution of the identity by using two controlled operators.
\begin{definition}
	Let $T,U\in GL(\hs)$ and let $\{\Lambda_i\in L(\hs,\hs_i):i\in I\}$ and $\{\Gamma_i\in L(\hs,\hs_i):i\in I\}$ be $(T,T)$-controlled and $(U,U)$-controlled g-Bessel sequence, respectively. We define a $(T,U)$-controlled g-frame operator for this pair of controlled g-Bessel sequence as follows:
	$$S_{T\Gamma\Lambda U}(f)=\sum_{i\in I}U^*\Gamma^*_i\Lambda_iT(f),~~~\forall f\in\hs.$$
\end{definition}
As mentioned before,  $\{\Lambda_i\in L(\hs,\hs_i):i\in I\}$ and $\{\Gamma_i\in L(\hs,\hs_i):i\in I\}$ are also two g-Bessel sequence.
So by \cite{khosravi2008fusion} the g-frame operator $S_{\Gamma\Lambda }(f)=\sum_{i\in I}\Gamma^*_i\Lambda_i(f)$ for this pair of g-Bessel sequence is well defined and bounded. Since $S_{T\Gamma\Lambda U}=U^*S_{\Gamma\Lambda }T$, $S_{T\Gamma\Lambda U}$ is a well defined and bounded operator.
\begin{lemma}
	Let $T,U\in GL(\hs)$ and let $\{\Lambda_i:i\in I\}$ and $\{\Gamma_i:i\in I\}$ be $(T,T)$-controlled and $(U,U)$-controlled g-Bessel sequence with bounds $B_{T}$ and $B_{U}$, respectively. If $S_{T\Gamma\Lambda U}$ is bounded below, then $\{\Lambda_i:i\in I\}$ and $\{\Gamma_i:i\in I\}$ are $(T,T)$-controlled and $(U,U)$-controlled g-frames, respectively.
\end{lemma}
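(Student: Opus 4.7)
The strategy is to extract both lower controlled g-frame bounds from the single estimate $\|S_{T\Gamma\Lambda U}f\|\ge m\|f\|$ by pairing it with a cross Cauchy--Schwarz inequality that couples the two Bessel hypotheses.

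First, for arbitrary $f,g\in\hs$ I would rewrite the bilinear form as
\[
\langle S_{T\Gamma\Lambda U}f,g\rangle=\sum_{i\in I}\langle U^*\Gamma_i^*\Lambda_i Tf,g\rangle=\sum_{i\in I}\langle \Lambda_i Tf,\Gamma_i Ug\rangle,
\]
then apply Cauchy--Schwarz term-by-term and in $\ell^2$, and invoke the $(U,U)$-controlled Bessel bound on $\{\Gamma_i\}$, to obtain
\[
|\langle S_{T\Gamma\Lambda U}f,g\rangle|\le\Bigl(\sum_{i\in I}\|\Lambda_i Tf\|^2\Bigr)^{1/2}\Bigl(\sum_{i\in I}\|\Gamma_i Ug\|^2\Bigr)^{1/2}\le\sqrt{B_U}\,\|g\|\,\Bigl(\sum_{i\in I}\|\Lambda_i Tf\|^2\Bigr)^{1/2}.
\]
Taking the supremum over $\|g\|=1$ gives $\|S_{T\Gamma\Lambda U}f\|^2\le B_U\sum_{i\in I}\|\Lambda_i Tf\|^2$, and combining with the bounded-below hypothesis $\|S_{T\Gamma\Lambda U}f\|\ge m\|f\|$ yields $\sum_{i\in I}\|\Lambda_i Tf\|^2\ge (m^2/B_U)\|f\|^2$. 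This is precisely the lower $(T,T)$-controlled g-frame bound for $\{\Lambda_i\}$, and together with the Bessel hypothesis $B_T$ it shows $\{\Lambda_i\}$ is a $(T,T)$-controlled g-frame.

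For $\{\Gamma_i\}$ I would run the symmetric argument on the adjoint $S_{T\Gamma\Lambda U}^*f=\sum_{i\in I}T^*\Lambda_i^*\Gamma_i Uf$; the analogous Cauchy--Schwarz estimate, using this time the $(T,T)$-Bessel bound of $\{\Lambda_i\}$, gives $\|S_{T\Gamma\Lambda U}^*f\|^2\le B_T\sum_{i\in I}\|\Gamma_i Uf\|^2$, from which a lower bound on $\|S_{T\Gamma\Lambda U}^*f\|$ produces the desired frame inequality for $\{\Gamma_i\}$. The main obstacle is that in a Hilbert space, $S$ bounded below only guarantees injectivity with closed range and does not by itself force $S^*$ to be bounded below. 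The hypothesis is therefore naturally read as $S_{T\Gamma\Lambda U}\in GL(\hs)$ --- the invertibility that is already in the spirit of Lemma 2.2 and Proposition 2.3 --- under which $S^*$ is automatically bounded below and the argument closes.
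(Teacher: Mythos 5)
Your first half is the paper's own argument almost verbatim: the same dualization $\langle S_{T\Gamma\Lambda U}f,g\rangle=\sum_{i\in I}\langle\Lambda_iTf,\Gamma_iUg\rangle$, the same Cauchy--Schwarz step against the Bessel bound $B_U$, and the same conclusion $\sum_{i\in I}\|\Lambda_iTf\|^2\ge (m^2/B_U)\|f\|^2$. Where you genuinely diverge is the second half, and you diverge for good reason: the paper computes $S_{T\Gamma\Lambda U}^*=S_{U\Lambda\Gamma T}$ and then simply asserts that this adjoint ``is also bounded below,'' with no justification---and, exactly as you observe, that implication fails for general operators (bounded below gives injectivity and closed range, hence surjectivity of the adjoint, not a lower bound for it). Your scruple is not hypothetical in this setting: take $\hs=\ell^2(\mathbb{N})$, $\hs_i=\mathbb{C}$, $T=U=I_{\hs}$, $\Lambda_if=\langle f,e_i\rangle$ and $\Gamma_if=\langle f,e_{i+1}\rangle$; both families are Bessel with bound $1$, and $S_{T\Gamma\Lambda U}$ is the unilateral shift, which is isometric (bounded below with $m=1$), yet $\{\Gamma_i\}$ fails the lower frame inequality at $f=e_1$. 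So the lemma as literally stated is false in its $\{\Gamma_i\}$ conclusion, the paper's proof has a real gap at precisely the step you flagged, and your repair---reading the hypothesis as $S_{T\Gamma\Lambda U}\in GL(\hs)$, or equivalently requiring both $S_{T\Gamma\Lambda U}$ and its adjoint to be bounded below---is exactly what is needed to close the argument. The strengthening also costs nothing in the paper's subsequent uses of the lemma: in Theorem 3.4 and Proposition 3.5 both $S_{T\Gamma\Lambda U}$ and $S_{U\Lambda\Gamma T}$ are shown bounded below (indeed invertible), so the patched statement suffices there.
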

\begin{proof}
	Suppose that there exists a number $\lambda>0$ such that for all $f\in\hs$
	$$\lambda\|f\|\le\| S_{T\Gamma\Lambda U}\|,$$
	then we have
	\begin{align*}
		\lambda\|f\|\le\| S_{T\Gamma\Lambda U}\|&=\sup_{g\in\hs,\|g\|=1}\big|<\sum_{i\in I}U^*\Gamma^*_i\Lambda_iTf,g> \big|\\
		&=\sup_{\|g\|=1}\big|< \sum_{i\in I}\Lambda_iTf,\Gamma_iUg>\big |
		\\
		&\le \sup_{\|g\|=1}\big(\sum_{i\in I}\|\Lambda_iTf\|^2\big)^{1/2}\big(\sum_{i\in I}\|\Gamma_iUg\|^2\big)^{1/2}\\
		&\le \sqrt{B_{U}}\big(\sum_{i\in I}\|\Lambda_iTf\|^2\big)^{1/2}.
	\end{align*}	
	Hence,
	$$\frac{\lambda^2}{B_U}\|f\|^2\le \sum_{i\in I}\|\Lambda_iTf\|^2.$$
	On the other hand, since 
	$$S_{T\Gamma\Lambda U}^*=(U^*S_{\Gamma\Lambda}T)^*=T^*S_{\Gamma\Lambda}^*U=T^*S_{\Lambda\Gamma}U=S_{U\Lambda\Gamma T},$$
	we can say that $S_{U\Lambda\Gamma T}$ is also bounded below. So by the above result $\{\Gamma_i:i\in I\}$ is a $(U,U)$-controlled g-frame. 
\end{proof}

\begin{theorem}
	Let $T\in GL(\hs)$ and let $\Lambda=\{\Lambda_i\in L(\hs,\hs_i):i\in I\}$ be a $(T,T)$-controlled g-Bessel sequence. Then the following conditions are equivalent.
	\begin{enumerate}
		\item  $\Lambda$ is a $(T,T)$-controlled  g-frame for $\hs$.
		\item  There exists an operator $U\in GL(\hs)$ and a $(U,U)$-controlled g-Bessel sequence $\Gamma=\{\Gamma_i\in L(\hs,\hs_i): i\in I\}$ such that $S_{U\Gamma\Lambda T}\ge mI_{\hs}$ on $\hs$, for some $m>0$.
	\end{enumerate}
	
\end{theorem}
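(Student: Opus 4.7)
The plan is to prove the two implications separately, using the simplest possible witness for (1)$\Rightarrow$(2) and adapting the Cauchy--Schwarz template of Lemma 3.2 for (2)$\Rightarrow$(1).

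For (1)$\Rightarrow$(2), I would take $U:=T$ and $\Gamma_i:=\Lambda_i$. Because $\Lambda$ is a $(T,T)$-controlled g-frame it is in particular $(T,T)=(U,U)$-controlled g-Bessel, so this is a legitimate choice for the data required in (2). Unfolding Definition 3.1 collapses the mixed operator to the $(T,T)$-controlled g-frame operator of $\Lambda$:
$$S_{U\Gamma\Lambda T}f=\sum_{i\in I}T^{*}\Lambda_i^{*}\Lambda_i Tf=S_{T\Lambda T}f.$$
The lower controlled frame inequality reads $\langle S_{T\Lambda T}f,f\rangle=\sum_{i\in I}\|\Lambda_i Tf\|^{2}\ge A\|f\|^{2}$, which is exactly $S_{T\Lambda T}\ge AI_{\hs}$, so taking $m:=A$ finishes this direction.

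For (2)$\Rightarrow$(1), I would first translate the operator inequality into a norm lower bound: $\langle S_{U\Gamma\Lambda T}f,f\rangle\ge m\|f\|^{2}$ together with Cauchy--Schwarz yields $\|S_{U\Gamma\Lambda T}f\|\ge m\|f\|$ for every $f\in\hs$. Next, expand $\|S_{U\Gamma\Lambda T}f\|$ by duality, exactly as in the proof of Lemma 3.2, writing $S_{U\Gamma\Lambda T}=T^{*}S_{\Gamma\Lambda}U$ and pushing the adjoints across inner products:
$$\|S_{U\Gamma\Lambda T}f\|=\sup_{\|g\|=1}\Bigl|\sum_{i\in I}\langle \Lambda_i Uf,\Gamma_i Tg\rangle\Bigr|\le\Bigl(\sum_{i\in I}\|\Lambda_i Uf\|^{2}\Bigr)^{1/2}\sup_{\|g\|=1}\Bigl(\sum_{i\in I}\|\Gamma_i Tg\|^{2}\Bigr)^{1/2}.$$
The supremum is finite because the $(U,U)$-controlled Bessel bound of $\Gamma$, combined with the invertible change of variable $g\mapsto U^{-1}Tg$, controls $\sum_{i}\|\Gamma_i Tg\|^{2}$ by a fixed multiple of $\|g\|^{2}$. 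Rearranging gives $\sum_{i\in I}\|\Lambda_i Uf\|^{2}\ge c\|f\|^{2}$ for some $c>0$; a further substitution $f=U^{-1}Th$, available because $T,U\in GL(\hs)$, converts this into the desired $(T,T)$-controlled lower frame bound $\sum_{i\in I}\|\Lambda_i Th\|^{2}\ge A\|h\|^{2}$ for all $h\in\hs$. Combined with the Bessel hypothesis on $\Lambda$, this is precisely the statement that $\Lambda$ is a $(T,T)$-controlled g-frame.

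The main obstacle is the bookkeeping induced by the asymmetric placement of the controllers in $S_{U\Gamma\Lambda T}=T^{*}S_{\Gamma\Lambda}U$: the $(T,T)$-control on $\Lambda$ sits on the opposite side from the pattern in Lemma 3.2, so the Bessel bounds do not align automatically and one cannot simply cite that lemma. The invertibility of $T$ and $U$ is what allows the two changes of variable above to absorb the mismatch into the frame constants; apart from this routine juggling, the argument is a direct adaptation of Lemma 3.2.
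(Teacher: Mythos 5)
Your proof is correct, and your first implication is identical to the paper's: the same witness $U=T$, $\Gamma_i=\Lambda_i$, with $m=A$. For the converse the engine is also the paper's---positivity of the mixed operator, Cauchy--Schwarz, and the $(U,U)$-controlled Bessel bound of $\Gamma$---but you route it through the operator-norm bound $\|S_{U\Gamma\Lambda T}f\|\ge m\|f\|$ and a duality supremum in the style of Lemma 3.2, whereas the paper estimates the quadratic form directly in one line:
$$m\|f\|^2\le\langle S_{U\Gamma\Lambda T}f,f\rangle=\sum_{i\in I}\langle\Lambda_iTf,\Gamma_iUf\rangle\le\Big(\sum_{i\in I}\|\Lambda_iTf\|^2\Big)^{1/2}\Big(\sum_{i\in I}\|\Gamma_iUf\|^2\Big)^{1/2}\le\sqrt{B_U}\,\|f\|\Big(\sum_{i\in I}\|\Lambda_iTf\|^2\Big)^{1/2},$$
yielding $\sum_{i\in I}\|\Lambda_iTf\|^2\ge m^2B_U^{-1}\|f\|^2$ with no supremum over $g$ and no change of variables. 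The divergence stems from the subscript convention: you read $S_{U\Gamma\Lambda T}=T^*S_{\Gamma\Lambda}U=\sum_{i\in I}T^*\Gamma_i^*\Lambda_iU$, which is the reading forced by Definition 3.1 together with the adjoint identity $S^*_{T\Gamma\Lambda U}=S_{U\Lambda\Gamma T}$ from Lemma 3.2, while the paper's own proof silently treats $S_{U\Gamma\Lambda T}$ as $\sum_{i\in I}U^*\Gamma_i^*\Lambda_iT$, pairing $T$ with $\Lambda$ so that the controllers align with the $(T,T)$- and $(U,U)$-Bessel hypotheses and the estimate is immediate. The paper is in fact internally inconsistent on this point (its Theorem 3.4 display uses yet another reading, $S_{U\Gamma\Lambda T}=\sum_{i\in I}T^*\Lambda_i^*\Gamma_iU$), so your interpretation is legitimate, and your two invertible substitutions $g\mapsto U^{-1}Tg$ and $f=U^{-1}Th$ correctly absorb the resulting mismatch---at the cost of a weaker lower frame bound, $m^2B_U^{-1}\|U^{-1}T\|^{-2}\|T^{-1}U\|^{-2}$ instead of the paper's $m^2B_U^{-1}$. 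In short: your version buys robustness against the ordering ambiguity (it proves the statement under either placement of the controllers, since $T,U\in GL(\hs)$); the paper's version buys brevity and sharper constants.
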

\begin{proof}1$\rightarrow$2. Let $\Lambda$ be a $(T,T)$-controlled g-frame with lower and upper g-frame bounds $A_{T}$ and $B_{T}$, respectively. Then we take $U=T$, $\Gamma_i=\Lambda_i$, for all $i\in I$. Hence we have 
	$$\left\langle S_{T\Lambda\Lambda T}f,f\right\rangle =\left\langle \sum_{i\in I}T^*\Lambda^*_i\Lambda_i Tf,f\right\rangle =\sum_{i\in I}\left\langle \Lambda_iTf,\Lambda_iTf\right\rangle \ge A_{T}\|f\|^2$$
	for all $f\in\hs$. Moreover,
	$$A_{T}\|f\|^2\le\|S_{T\Lambda\Lambda T}^{1/2}\|^2\le B_{T}\|f\|^2.$$

	By Lemma 2.2, $S_{T\Lambda\Lambda T}\in GL^+(\hs)$.
	
	2$\rightarrow$1. Suppose that there exists an operator $U\in GL(\hs)$ and a $(U,U)$-controlled g-Bessel sequence $\Gamma=\{\Gamma_i\in L(\hs,\hs_i):i\in I\}$ with Bessel bound $B_U$. Also let $m>0$ be a constant such that
	$$\left\langle S_{U\Gamma\Lambda T}f,f\right\rangle \ge m\|f\|^2$$
	for all $f\in\hs$. Then we have
	\begin{align*}
		m\|f\|^2&\le\left\langle S_{U\Gamma\Lambda T}f,f\right\rangle=\sum_{i\in I}\left\langle \Lambda_iTf,\Gamma_iUf\right\rangle \\
		&\le \big(\sum_{i\in I}\|\Lambda_iTf\|^2\big)^{1/2}\big(\sum_{i\in I}\|\Gamma_iUf\|^2\big)^{1/2}\\
		&\le \sqrt{B_U}\|f\|\big(\sum_{i\in I}\|\Lambda_iTf\|^2\big)^{1/2},
	\end{align*}
	by the Cauchy-Schwartz inequality. Hence,
	$$\frac{m^2}{B_U}\|f\|^2\le \sum_{i\in I}\|\Lambda_iTf\|^2\le B_T\|f\|^2.$$
	So $\Lambda$ is a $(T,T)$-controlled  g-frame for $\hs$. 
\end{proof}
\begin{theorem}
	Let $T,U\in GL(\hs)$ and let $\{\Lambda_i\in L(\hs,\hs_i):i\in I\}$ be a $(T,T)$-controlled g-frame with bounds $(A,B)$ for $\hs$. Let the family
	$\{\Gamma_i\in L(\hs,\hs_i):i\in I\}$ be a $(U,U)$-controlled g-Bessel sequence. Suppose that there exists a number $0\le \lambda\le A$ such that
	$$\|(S_{T\Gamma\Lambda U}-S_{T\Lambda T})f\|\le\lambda\|f\|,~~~\forall f\in\hs.$$
	Then $S_{T\Gamma\Lambda U}$ is invertible and also $\{\Gamma_i\in L(\hs,\hs_i):i\in I\}$ is a $(U,U)$-controlled g-frame for $\hs$.
\end{theorem}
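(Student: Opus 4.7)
The plan is to view $S_{T\Gamma\Lambda U}$ as a small perturbation of $S_{T\Lambda T}$, use this to establish invertibility, and then invoke Lemma 3.2 to upgrade $\{\Gamma_i\}$ from a Bessel sequence to a controlled g-frame. Since $\{\Lambda_i\}$ is a $(T,T)$-controlled g-frame with bounds $(A,B)$, the comment after Definition 2.1 gives $AI_{\hs}\le S_{T\Lambda T}\le BI_{\hs}$, so $S_{T\Lambda T}$ is invertible and $\|S_{T\Lambda T}^{-1}\|\le 1/A$.

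First I would write the algebraic factorisation
$$S_{T\Gamma\Lambda U}=S_{T\Lambda T}\bigl(I_{\hs}-S_{T\Lambda T}^{-1}(S_{T\Lambda T}-S_{T\Gamma\Lambda U})\bigr),$$
and set $K:=S_{T\Lambda T}^{-1}(S_{T\Lambda T}-S_{T\Gamma\Lambda U})$. The perturbation hypothesis gives $\|S_{T\Lambda T}-S_{T\Gamma\Lambda U}\|\le\lambda$, so $\|K\|\le\lambda/A\le 1$. Under the natural reading $\lambda<A$, a Neumann series argument shows that $I_{\hs}-K$ is invertible, and hence $S_{T\Gamma\Lambda U}$ is the composition of two invertible operators, so invertible itself. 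This gives the first conclusion.

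Once invertibility of $S_{T\Gamma\Lambda U}$ is known, it is in particular bounded below, with constant $1/\|S_{T\Gamma\Lambda U}^{-1}\|$. Since $\{\Lambda_i\}$ is a $(T,T)$-controlled g-Bessel sequence (being a g-frame) and $\{\Gamma_i\}$ is a $(U,U)$-controlled g-Bessel sequence by hypothesis, Lemma 3.2 applies and immediately yields that $\{\Gamma_i\}$ is a $(U,U)$-controlled g-frame for $\hs$.

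The main obstacle I anticipate is really just the boundary case $\lambda=A$ permitted by the statement, where $\|K\|\le 1$ is not enough to invoke the Neumann series. For this case one should instead estimate directly
$$\|S_{T\Gamma\Lambda U}f\|\ge\|S_{T\Lambda T}f\|-\|(S_{T\Lambda T}-S_{T\Gamma\Lambda U})f\|\ge(A-\lambda)\|f\|,$$
using that $S_{T\Lambda T}\ge AI_{\hs}$ combined with Cauchy--Schwarz gives $\|S_{T\Lambda T}f\|\ge A\|f\|$, and then separately deduce surjectivity by applying the same lower bound to the adjoint $S^{*}_{T\Gamma\Lambda U}=S_{U\Lambda\Gamma T}$ identified in the proof of Lemma 3.2; in practice this forces $\lambda<A$ strictly for a nontrivial conclusion, which is presumably the intended reading of the hypothesis.
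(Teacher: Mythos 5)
Your proposal is correct (under the reading $\lambda<A$) and reaches the conclusion by a genuinely different mechanism than the paper. The paper never factorises: it estimates directly $\|S_{T\Gamma\Lambda U}f\|\ge\|S_{T\Lambda T}f\|-\|(S_{T\Gamma\Lambda U}-S_{T\Lambda T})f\|\ge(A-\lambda)\|f\|$, repeats the same lower bound for the adjoint via $\|(S_{T\Gamma\Lambda U}-S_{T\Lambda T})^*\|\le\lambda$ (the adjoint being $S_{U\Lambda\Gamma T}$ by the computation in Lemma 3.2, written $S_{U\Gamma\Lambda T}$ in the paper), concludes invertibility from the two bounded-below estimates, and then, instead of citing Lemma 3.2, inlines its sup-over-$g$ duality computation to obtain the explicit lower controlled bound $\frac{(A-\lambda)^2}{B}\|f\|^2\le\sum_{i\in I}\|\Gamma_iUf\|^2$. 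Your Neumann-series factorisation $S_{T\Gamma\Lambda U}=S_{T\Lambda T}(I_{\hs}-K)$ with $\|K\|\le\lambda/A$ gets invertibility in one step, with the same quantitative bound $\|S^{-1}_{T\Gamma\Lambda U}\|\le\frac{A}{A-\lambda}\cdot\frac{1}{A}=(A-\lambda)^{-1}$, and your subsequent appeal to Lemma 3.2 is legitimate and in fact tighter than the paper's own use of that lemma: with invertibility already in hand, the adjoint is genuinely bounded below, whereas Lemma 3.2 as stated glosses over why boundedness below of an operator passes to its adjoint. What the paper's route buys is the explicit frame bound for $\{\Gamma_i\}$; what yours buys is brevity and a cleaner logical structure. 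Your flag on the boundary case is also a genuine catch: the hypothesis $0\le\lambda\le A$ must be read as $\lambda<A$, since at $\lambda=A$ both arguments collapse and the statement is actually false (for a tight controlled frame with $A=B$, the choice $\Gamma_i=0$ satisfies the perturbation hypothesis with $\lambda=A$, yet $S_{T\Gamma\Lambda U}=0$ is not invertible); the paper's proof silently assumes $\lambda<A$ as well, since its conclusion ``bounded below by $A-\lambda$, therefore one-to-one with closed range'' is vacuous at $\lambda=A$. Your fallback estimate in that final paragraph is precisely the paper's main argument, and you are even more careful than the paper in justifying $\|S_{T\Lambda T}f\|\ge A\|f\|$ via $S_{T\Lambda T}\ge AI_{\hs}$ and Cauchy--Schwarz, a step the paper uses without comment.
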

\begin{proof}
	Let $f\in\hs$ be an arbitrary element of $\hs$, then we have
	\begin{align*}
		\|S_{T\Gamma\Lambda U}f\|&=\|S_{T\Gamma\Lambda U}f-S_{T\Lambda T}f+S_{T\Lambda T}f\| \\
		& \ge\|S_{T\Lambda T}f\|-\|S_{T\Gamma\Lambda U}f-S_{T\Lambda T}f\|\\
		&\ge(A-\lambda)\|f\|.
	\end{align*}
	So $S_{T\Gamma\Lambda U}$ is bounded below and therefore one-to-one with closed range. On the other hand, since
	$$\|S_{U\Gamma\Lambda T}-S_{T\Lambda T}\|=\|(S_{T\Gamma\Lambda U}-S_{T\Lambda T})^*\|\le\lambda,$$	
	by the above result $S_{U\Gamma\Lambda T}$ is also bounded below ($A-\lambda$) and therefore one-to-one with closed range. Hence both $S_{T\Gamma\Lambda U}$ and $S_{U\Gamma\Lambda T}$ are invertible.
	And,
	\begin{align}
		(A-\lambda)\|f\|\le\| S_{U\Gamma\Lambda T}\|&=\sup_{g\in\hs,\|g\|=1}\big|<\sum_{i\in I}T^*\Lambda^*_i\Gamma_iUf,g> \big|\nonumber\\
		&=\sup_{\|g\|=1}\big|< \sum_{i\in I}\Gamma_iUf,\Lambda_iTg>\big |
		\nonumber\\
		&\le \sup_{\|g\|=1}\big(\sum_i\|\Gamma_iUf\|^2\big)^{1/2}\big(\sum_i\|\Lambda_iTg\|^2\big)^{1/2}\nonumber\\
		&\le \sqrt{B}\big(\sum_i\|\Gamma_iUf\|^2\big)^{1/2}.\nonumber
	\end{align}	
	Hence,
	$$\frac{(A-\lambda)^2}{B}\|f\|^2\le \sum_{i\in I}\|\Gamma_iUf\|^2.$$ 
	Therefore, $\{\Gamma_i\in L(\hs,\hs_i):i\in I\}$ is a $(U,U)$-controlled g-frame for $\hs$.
\end{proof}
Another version of these cases is as follows.
\begin{proposition}
	Let $\Lambda$ and $\Gamma$ be controlled g-Bessel sequences as mentioned in Definition 3.1. Suppose that there exists $0<\varepsilon<1$ such that 
	$$\|f-S_{T\Gamma\Lambda U}f\|\le \varepsilon\|f\|,~~~\forall f\in\hs.$$
	Then $\Lambda$ and $\Gamma$ are $(T,T)$-controlled and $(U,U)$-controlled g-frames, respectively. Furthermore, $S_{T\Gamma\Lambda U}$ is invertible.
\end{proposition}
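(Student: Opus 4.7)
The plan is to reduce everything to Lemma 3.2 together with a one-line Neumann-series observation. The hypothesis $\|f - S_{T\Gamma\Lambda U}f\| \le \varepsilon\|f\|$ for every $f\in\hs$ is exactly the operator-norm bound $\|I_{\hs} - S_{T\Gamma\Lambda U}\| \le \varepsilon < 1$, and the discussion immediately after Definition 3.1 already shows that $S_{T\Gamma\Lambda U}$ is a bounded operator on $\hs$. I would therefore first invoke the standard Neumann series to conclude that $S_{T\Gamma\Lambda U} = I_{\hs} - (I_{\hs} - S_{T\Gamma\Lambda U})$ is invertible on $\hs$, with inverse $\sum_{n\ge 0}(I_{\hs}-S_{T\Gamma\Lambda U})^n$; this settles the ``Furthermore'' clause.

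For the controlled g-frame conclusions, I would extract a quantitative lower bound for $S_{T\Gamma\Lambda U}$ directly from the hypothesis. By the reverse triangle inequality,
$$\|S_{T\Gamma\Lambda U}f\|\ge \|f\|-\|f-S_{T\Gamma\Lambda U}f\|\ge (1-\varepsilon)\|f\|$$
for every $f\in\hs$, so $S_{T\Gamma\Lambda U}$ is bounded below by the positive constant $1-\varepsilon$. At this point the hypotheses of Lemma 3.2 are all in place: $\Lambda$ is a $(T,T)$-controlled g-Bessel sequence (with bound $B_T$), $\Gamma$ is a $(U,U)$-controlled g-Bessel sequence (with bound $B_U$), and $S_{T\Gamma\Lambda U}$ is bounded below. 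Applying Lemma 3.2 then delivers precisely the two conclusions: $\Lambda$ is a $(T,T)$-controlled g-frame and $\Gamma$ is a $(U,U)$-controlled g-frame.

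There is essentially no hard step in this argument; the entire proof is a brief combination of a reverse triangle inequality and an invocation of Lemma 3.2, with Neumann's theorem handling invertibility. The only minor bookkeeping point is to notice that the Bessel upper bounds required by Lemma 3.2 are exactly the bounds $B_T$ and $B_U$ already furnished by the standing assumption that $\Lambda$ and $\Gamma$ are the corresponding controlled g-Bessel sequences from Definition 3.1, so no additional Bessel estimate has to be re-derived.
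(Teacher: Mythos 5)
Your proposal is correct and follows essentially the same route as the paper: the hypothesis is read as $\|I_{\hs}-S_{T\Gamma\Lambda U}\|\le\varepsilon<1$ (giving invertibility via the Neumann series), the reverse triangle inequality gives the lower bound $(1-\varepsilon)\|f\|$, and Lemma 3.2 yields the frame conclusions. The only cosmetic difference is that the paper re-derives the statement for $\Gamma$ explicitly by passing to the adjoint, via $\|I_{\hs}-S_{U\Lambda\Gamma T}\|=\|(I_{\hs}-S_{T\Gamma\Lambda U})^*\|\le\varepsilon$, whereas you obtain it directly from the full statement of Lemma 3.2, which already contains that adjoint step internally.
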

\begin{proof}
	Firstly 
	$$\|I_{\hs}-S_{T\Gamma\Lambda U}\|\le \varepsilon<1,$$
	therefore $S_{T\Gamma\Lambda U}$ is invertible.
	Secondly, let $f$ ban an arbitrary element of $\hs$. Then we have 
	$$\|S_{T\Gamma\Lambda U}f\|\ge \|f\|-\|f-S_{T\Gamma\Lambda U}f\|\ge(1-\varepsilon)\|f\|.$$
	Hence $S_{T\Gamma\Lambda U}$ is bounded below. By Lemma 3.2, we know that $\Lambda$ is a $(T,T)$-controlled g-frame. 
	
	On the other hand, we have 
	$$\|I_{\hs}-S_{U\Lambda\Gamma T}\|=\|(I_{\hs}-S_{T\Gamma\Lambda U})^*\|\le \varepsilon.$$
	Hence similarly we can say that $\Gamma$ is a $(U,U)$-controlled g-frame.  	
\end{proof}

With the hypotheses of Theorem 3.4 or Proposition 3.5, both $S_{T\Gamma\Lambda U}$ and $S_{U\Gamma\Lambda T}$ are invertible. Then the family 
$$\{S^{-1}_{T\Gamma\Lambda U}U^*\Gamma^*_i\Lambda_iT\}_{i\in I}$$
is a resolution of the identity. Also we have new reconstruction formulas as follows
$$f=\sum_{i\in I}S^{-1}_{T\Gamma\Lambda U}U^*\Gamma^*_i\Lambda_iTf=\sum_{i\in I}\Gamma^*_i\Lambda_iTS^{-1}_{T\Gamma\Lambda U}f$$
and 
$$f=\sum_{i\in I}S^{-1}_{U\Lambda\Gamma T}T^*\Lambda^*_i\Gamma_i Uf=\sum_{i\in I}T^*\Lambda^*_i\Gamma_iUS^{-1}_{U\Lambda\Gamma T}f.$$
Suppose that $\|I_{\hs}-S_{T\Gamma\Lambda U}\|<1$, then as we mentioned in Proposition 3.5, $S_{T\Gamma\Lambda U}$ is invertible and we have
$$S^{-1}_{T\Gamma\Lambda U}=\sum^{\infty}_{n=0}(I_{\hs}-S_{T\Gamma\Lambda U})^n.$$
Then we have
$$f=\sum_{i\in I}\sum^{\infty}_{n=0}(I_{\hs}-S_{T\Gamma\Lambda U})^nU^*\Gamma^*_i\Lambda_iTf=\sum_{i\in I}\sum^{\infty}_{n=0}U^*\Gamma^*_i\Lambda_iT(I_{\hs}-S_{T\Gamma\Lambda U})^nf.$$
Furthermore,
$$\|S^{-1}_{T\Gamma\Lambda U}\|\le (1-\|I_{\hs}-S_{T\Gamma\Lambda U}\|)^{-1}.$$
Therefore, 
$$\{(I_{\hs}-S_{T\Gamma\Lambda U})^nU^*\Gamma^*_i\Lambda_iT\}_{i\in I,n\in \z^+}$$
is a new resolution of the identity.
\section{Stability under perturbations}
Perturbation of frames is an important and useful objects to construct new frames from a given one. In this section we give new definitions of perturbations of g-frames  with respect to the operators $T,U$.
\begin{definition}
	Let  $T,U\in GL(\hs)$  and let $\{\Lambda_i\in L(\hs,\hs_i):i\in I\}$ and $\{\Gamma_i\in L(\hs,\hs_i):i\in I\}$ be two g-complete family of bounded operator. Let $0\le \lambda_1,\lambda_2<1$ be real numbers and let $\C=\{c_i\}_{i\in I}$ be an arbitrary sequence of positive numbers such that $\|\C\|_2<\infty$. We say that the family $\{\Gamma_i\in L(\hs,\hs_i):i\in I\}$ is a $(\lambda_1,\lambda_2,\C,T,U)$-perturbation of $\{\Lambda_i\in L(\hs,\hs_i):i\in I\}$ if we have 
	$$\|\Lambda_iTf-\Gamma_iUf\|\le\lambda_1\|\Lambda_iTf\|+\lambda_2\|\Gamma_iUf\|+c_i\|f\|,~~\forall f\in\hs.$$
\end{definition}

We have the following important result.
\begin{theorem}
	Let $\{\Lambda_i\in L(\hs,\hs_i):i\in I\}$ be a g-frame for $\hs$ with frame bounds $A,B$. Suppose that $T,U\in GL(\hs)$. Let $\{\Gamma_i\in L(\hs,\hs_i):i\in I\}$ be a $(\lambda_1,\lambda_2,\C,T,U)$-perturbation of $\{\Lambda_i\in L(\hs,\hs_i):i\in I\}$, in which 
	$$(1-\lambda_1)\sqrt{A}\|T^{-1}\|^{-1}>\|\C\|_2.$$
	Then $\{\Gamma_i\in L(\hs,\hs_i):i\in I\}$ is a g-frame for $\hs$ with g-frame bounds
	$$\left( \frac{(1-\lambda_1)\sqrt{A}\|T^{-1}\|^{-1}-\|\C\|_2}{1+\lambda_2}\|U\|^{-1}\right) ^2,\left(\frac{(1+\lambda_1)\sqrt{B}\|T\|+\|\C\|_2}{1-\lambda_2}\|U\|^{-1}\right) ^2$$
\end{theorem}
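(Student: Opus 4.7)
The plan is to mirror the standard frame-perturbation argument, but to carry the controller operators $T$ and $U$ through every step. First, I would split the perturbation hypothesis into two one-sided pointwise bounds via the triangle inequality, rewriting $\|\Gamma_iUf\|\le\|\Lambda_iTf\|+\|\Lambda_iTf-\Gamma_iUf\|$ and its reverse to obtain
\begin{equation*}
(1-\lambda_2)\|\Gamma_iUf\|\le(1+\lambda_1)\|\Lambda_iTf\|+c_i\|f\|
\end{equation*}
and
\begin{equation*}
(1+\lambda_2)\|\Gamma_iUf\|\ge(1-\lambda_1)\|\Lambda_iTf\|-c_i\|f\|,
\end{equation*}
valid for every $i\in I$ and every $f\in\hs$.

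Next I would square, sum over $i\in I$, take square roots, and invoke Minkowski's inequality in $\ell^2(I)$ to separate the $\|\Lambda_iTf\|$-contributions from the $c_i\|f\|$-contributions. This produces
\begin{equation*}
(1-\lambda_2)\Bigl(\sum_{i\in I}\|\Gamma_iUf\|^2\Bigr)^{1/2}\le(1+\lambda_1)\Bigl(\sum_{i\in I}\|\Lambda_iTf\|^2\Bigr)^{1/2}+\|\C\|_2\|f\|
\end{equation*}
together with the matching lower estimate, which has $(1+\lambda_2)$ on the left and the $\|\C\|_2\|f\|$ term subtracted on the right.

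Then I would invoke the g-frame inequality for $\{\Lambda_i\}$ applied at the vector $Tf$ to sandwich $\sum_i\|\Lambda_iTf\|^2$ between $A\|Tf\|^2$ and $B\|Tf\|^2$, and use the operator-norm bounds $\|T^{-1}\|^{-1}\|f\|\le\|Tf\|\le\|T\|\|f\|$ to convert $\|Tf\|$ into a multiple of $\|f\|$. The standing hypothesis $(1-\lambda_1)\sqrt{A}\|T^{-1}\|^{-1}>\|\C\|_2$ is exactly what is needed to keep the right-hand side of the lower estimate strictly positive, so that a genuine frame lower bound survives.

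The final step is to pass from $f$ to an arbitrary $g\in\hs$ via the substitution $g=Uf$, i.e.\ $f=U^{-1}g$, using invertibility of $U$. The $\|f\|$ factor is then replaced by the appropriate operator-norm multiple of $\|g\|$, producing the advertised factors involving norms of $U$ (respectively $U^{-1}$) on each side. The main obstacle is purely bookkeeping: keeping track of which direction each inequality runs, particularly the subtraction in the lower bound, and of which operator norm ($\|U\|$ versus $\|U^{-1}\|$, $\|T\|$ versus $\|T^{-1}\|^{-1}$) appears on which side, since a single sign slip would destroy the lower frame bound.
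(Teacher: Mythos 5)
Your outline follows the paper's proof almost step for step: Minkowski's inequality in $\ell^2(I)$ to separate the three contributions of the perturbation bound, the g-frame inequality evaluated at $Tf$ together with $\|T^{-1}\|^{-1}\|f\|\le\|Tf\|\le\|T\|\,\|f\|$, and finally the substitution $g=Uf$, with the hypothesis $(1-\lambda_1)\sqrt{A}\|T^{-1}\|^{-1}>\|\C\|_2$ used exactly where you say, to keep the lower coefficient positive. One step of your plan, however, is not literally valid as ordered: after rearranging pointwise to $(1+\lambda_2)\|\Gamma_iUf\|\ge(1-\lambda_1)\|\Lambda_iTf\|-c_i\|f\|$, the right-hand side can be negative for individual $i$ (e.g.\ when $\Lambda_iTf=\Gamma_iUf=0$ but $c_i>0$), so you may not square and sum termwise. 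The standard repair is cheap: apply Minkowski at the level of $\ell^2(I)$-norms to the pointwise inequalities $\|\Lambda_iTf\|\le\|\Gamma_iUf\|+\|\Lambda_iTf-\Gamma_iUf\|$ and to the perturbation bound itself, then rearrange the resulting scalar inequality $(1-\lambda_1)\bigl(\sum_i\|\Lambda_iTf\|^2\bigr)^{1/2}-\|\C\|_2\|f\|\le(1+\lambda_2)\bigl(\sum_i\|\Gamma_iUf\|^2\bigr)^{1/2}$; equivalently, replace the right side by its positive part before squaring. (The paper's own displayed chain commits the same termwise squaring of possibly negative quantities and is healed by the same observation --- its final line is precisely this sequence-level statement.)

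On the constants: executed honestly, your substitution $f=U^{-1}g$ gives $\|f\|\le\|U^{-1}\|\,\|g\|$ in the upper estimate, so your route yields the upper bound with factor $\|U^{-1}\|$, \emph{not} the factor $\|U\|^{-1}$ appearing in the theorem --- and here the discrepancy is the paper's fault, not yours. The paper obtains its constant from the substitution $\|f\|\le\|Uf\|/\|U\|^{-1}$, i.e.\ $\|f\|\le\|U\|\,\|Uf\|$, which is false in general; indeed the stated upper bound fails outright: take a single index, $\Lambda_1=I_{\hs}$, $T=I_{\hs}$, $\Gamma_1=U^{-1}$ with $U$ diagonal with entries $M$ and $1/M$, and $c_1=\varepsilon$ small, so the perturbation hypothesis holds with $\lambda_1=\lambda_2=0$; the claimed upper constant is of order $1/M^2$ while $\sup_{\|g\|=1}\|\Gamma_1g\|^2=\|U^{-1}\|^2=M^2$. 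The lower constant, with $\|U\|^{-1}=1/\|U\|$ coming from $\|f\|\ge\|g\|/\|U\|$, is correct as stated, and your plan recovers it, the standing hypothesis guaranteeing (as you note) that the coefficient being multiplied stays positive. So your proof, patched as in the first paragraph, establishes the theorem with the upper bound corrected to $\Bigl(\frac{(1+\lambda_1)\sqrt{B}\|T\|+\|\C\|_2}{1-\lambda_2}\|U^{-1}\|\Bigr)^2$; the literal displayed constant cannot be proved because it is not true.
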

\begin{proof}
	Since $\{\Lambda_i\in L(\hs,\hs_i):i\in I\}$ be a g-frame for $\hs$ with frame bounds $A,B$, for all $f\in\hs$, we have 
	$$\frac{\sqrt{A}}{\|T^{-1}\|}\|f\|\le \sum_{i\in I}\left( \|\Lambda_iTf\|^2\right) ^{\frac{1}{2}}\le \sqrt{B}\|T\|f\|.$$
	Then by triangular inequality we have
	\begin{align*}
		\left( \sum_{i\in I}\|\Gamma_iUf\|^2\right) ^{\frac{1}{2}}
		&\le\left(\sum_{i\in I}(\|\Lambda_iTf\|+\|\Lambda_iTf-\Gamma_iUf\|)^2\right)^{\frac{1}{2}}  \\
		&\le\left(\sum_{i\in I}(\|\Lambda_iTf\|+\lambda_1\|\Lambda_iTf\|+\lambda_2\|\Gamma_iUf\|+c_i\|f\|)^2\right)^{\frac{1}{2}}\\
		&\le (1+\lambda_1)\sum_{i\in I}\left( \|\Lambda_iTf\|^2\right) ^{\frac{1}{2}}+\lambda_2\sum_{i\in I}\left( \|\Gamma_iUf\|^2\right) ^{\frac{1}{2}}+\|\C\|_2\|f\|.
	\end{align*}		
	Hence
	$$(1-\lambda_2)\sum_{i\in I}\left( \|\Gamma_iUf\|^2\right) ^{\frac{1}{2}}\le (1+\lambda_1)\sqrt{B}\|T\|\frac{\|Uf\|}{\|U\|^{-1}}+\|\C\|_2\frac{\|Uf\|}{\|U\|^{-1}}.$$
	Since $Uf\in\hs$, finally we have
	$$\sum_{i\in I}\|\Gamma_if\|^2\le \left( \frac{(1+\lambda_1)\sqrt{B}\|T\|+\|\C\|_2)}{1-\lambda_2}\|U\|^{-1}\right) ^2\|f\|^2.$$
	Now for the lower bound we have 
	\begin{align}
		\left( \sum_{i\in I}\|\Gamma_iUf\|^2\right) ^{\frac{1}{2}}
		&\ge\left(\sum_{i\in I}(\|\Lambda_iTf\|-\|\Lambda_iTf-\Gamma_iUf\|)^2\right)^{\frac{1}{2}}  \nonumber\\
		&\ge\left(\sum_{i\in I}(\|\Lambda_iTf\|-\lambda_1\|\Lambda_iTf\|-\lambda_2\|\Gamma_iUf\|-c_i\|f\|)^2\right)^{\frac{1}{2}}\nonumber\\
		&\ge (1-\lambda_1)\sum_{i\in I}\left( \|\Lambda_iTf\|^2\right) ^{\frac{1}{2}}-\lambda_2\sum_{i\in I}\left( \|\Gamma_iUf\|^2\right) ^{\frac{1}{2}}
		-\|\C\|_2\|f\|.\nonumber
	\end{align}
	Hence 
	$$(1+\lambda_2)	\sum_{i\in I}\left( \|\Gamma_iUf\|^2\right) ^{\frac{1}{2}}\ge(1-\lambda_1)\sqrt{A}\|T^{-1}\|^{-1}\frac{\|Uf\|}{\|U\|^{-1}}-\|\C\|_2\frac{\|Uf\|}{\|U\|^{-1}},$$ 
	which yields
	$$\sum_{i\in I}\|\Gamma_if\|^2\ge\left( \frac{(1-\lambda_1)\sqrt{A}\|T^{-1}\|^{-1}-\|\C\|_2}{1+\lambda_2}\|U\|^{-1}\right) ^2\|f\|^2.$$
	Therefore, we get the results.	
\end{proof}
\section*{Acknowledgements}
The research is supported by the National Natural Science Foundation of China (Nos. 11271001 and 61370147).
\bibliographystyle{amsplain}

\bibliography{mybib} 

\end{document}